\newcommand{\newreptheorem}[2]{%
	\newenvironment{rep#1}[1]{%
		\def\rep@title{#2 \ref{##1}}%
		\begin{rep@proposition}}%
		{\end{rep@proposition}}}
\newtheorem{theorem}{Theorem}
\newtheorem{definition}{Definition}
\newtheorem{corollary}{Corollary}
\newtheorem{proposition}{Proposition}
\newtheorem{lemma}{Lemma}
\newtheorem{remark}{Remark}
\newtheorem{example}{Example}
\DeclarePairedDelimiter{\Vpair}{\|}{\|} 
\DeclarePairedDelimiter{\vpair}{|}{|} 
\newcommand\E{\mathbb{E}} 
\newcommand\norm[2][0pt]{\Vpair*{#2}} 
\newcommand\abs[2][0pt]{\vpair*{#2}} 
\DeclareMathOperator*{\argmin}{arg\,min}
\newcommand\real{\mathbb{R}} 
\newcommand\tcr[1]{} 
\newcommand\tcb{}
\newcommand\xtrue{x^\star}
\newcommand\xa{ x^\alpha}
\newcommand\xak{x^{\alpha_k}}
\newcommand\ztrue{z^\star}
\newcommand\xo{x^0}
\newcommand\za{ z^\alpha}
\newcommand\ynoise{y}
\newcommand\datafit{\mathcal F}
\newcommand\reg{\mathcal R}
\newcommand\ulfun{\mathcal J}
\newcommand\llfun{\Phi}
\newcommand\op{A}
\newcommand\breg{D}
\newcommand\dini[1]{{#1}'_+}
\title{On Optimal Regularization Parameters via Bilevel Learning}
\author{Matthias J. Ehrhardt}
\author{Silvia Gazzola}
\author{Sebastian J. Scott \thanks{Corresponding author: ss2767@bath.ac.uk}}
\affil{Department of Mathematical Sciences, University of Bath, Claverton Down, BA2 7AY, UK}
\def\@maketitle{%
	\newpage
	\null
	\vskip 2em%
	\begin{center}%
		\let \footnote \thanks
		{\Large\bfseries \@title \par}%
		\vskip 1.5em%
		{\normalsize
			\lineskip .5em%
			\begin{tabular}[t]{c}%
				\@author
			\end{tabular}\par}%
	\end{center}%
	\par
	\vskip 1.5em}
\begin{document}
	\maketitle
	
	\begin{center}
		\Large \textbf{Abstract}
	\end{center}
		Variational regularization is commonly used to solve linear inverse
		problems, and involves augmenting a data fidelity by a regularizer.
		The regularizer is used to promote a priori information and is weighted by a regularization parameter.
		Selection of an appropriate regularization parameter is critical, with various choices leading to very different reconstructions. 
		Classical strategies used to determine a suitable parameter value include the  discrepancy principle and the L-curve criterion, and in recent years a supervised machine learning approach called bilevel learning has been employed. 
		Bilevel learning is a powerful framework to determine optimal parameters and involves solving a nested optimization problem. 
		While previous strategies enjoy various theoretical results, the well-posedness of bilevel learning in this setting is still an open question.
		In particular, a necessary property is positivity of the determined regularization parameter.
		In this work, we provide a new condition that better characterizes positivity of optimal regularization parameters than the existing theory.
		Numerical results verify and explore this new condition for both small and high-dimensional problems.
	
	\vspace{1em}
	\textbf{Keywords:} Inverse problems; Machine learning; Variational regularization; Bilevel learning; Imaging; Regularization parameter

	\vspace{1em}
	
	\section{Introduction}	\label{sec:intro}
	
	Inverse problems are a class of mathematical problems where one is tasked to determine the input to a system given its output and some knowledge about the properties of said system. Such problems arise in many important science and engineering applications such as biomedical, astronomical, and seismic imaging  ~\cite{ArridgeSimon2019Sipu, BenningMartin2018Mrmf, EnglInverseBook,  HansenPerChristian2006Di:m}. 
	
	We consider the class of discrete linear inverse problems wherein we are interested in retrieving the exact input $\xtrue\in\real^n$ given a matrix $\op\in\real^{m\times n},$ and corrupted measurement $\ynoise\in\real^m$ satisfying
	\begin{equation}
		\ynoise \approx \op\xtrue. \label{eq:inverse-prob}
	\end{equation}
	
	The challenge with inverse problems such as \eqref{eq:inverse-prob} is that almost all interesting applications are ill-posed in the sense of Hadamard \cite{HadamardSurLP}, in that at least one of the following conditions regarding solutions is violated: existence; uniqueness; continuity with respect to the observed measurement $\ynoise.$ 
	A classical approach to remedy the ill-posedness of \eqref{eq:inverse-prob} is via variational regularization \cite{BenningMartin2018Mrmf,ChungJulianneLIPi}, wherein one solves a minimization problem such as
	\begin{equation}
		\min_{x\in\real^n} \left\{ \frac{1}{2}\norm{\op x - \ynoise}^2 + \alpha \reg(x)\right\}.\label{eq:var-reg-general}
	\end{equation}
	In \eqref{eq:var-reg-general} we consider the popular squared Euclidean distance data fidelity, which can be statistically motivated by considering the negative log likelihood of an additive Gaussian noise corruption of $\op\xtrue$ \cite{ArridgeSimon2019Sipu}.
	The regularizer  $\reg:\real^n\to\real$ is used to encourage a priori information of the ground truth $\xtrue$ in reconstructions. 
	\tcb{Although smooth regularizers are used \cite{BenningMartin2018Mrmf,NuytsJ.2002Acpp},} in recent years a popular non-smooth choice has been total variation (TV)  \cite{RudinLeonidI1992Ntvb} which encourages sharp edges in reconstructions. 
	\tcb{While TV has seen wide success, it is known to induce a staircasing artifact \cite{Ring2000SPSTVRP}.
		In an attempt to remedy this issue but capture the otherwise success of TV, an entire zoo of TV-like regularizers have been proposed \cite{BenningMartin2018Mrmf,bredies2020hotvag} which include those that build on higher-order derivatives such as second-order TV \cite{Papafitsoros2014SOTV}, incorporate non-local structure \cite{Gilboa2008NLTV}, infimal-convolution based approaches \cite{ChambolleAntonin1997ICTV}, and the total generalized variation \cite{bredies2010tgv}.
		Due to computational or theoretical reasons \cite{GazzolaLandanman2020KmfIP,SherryFerdia2020}, one may require a smooth approximation of such regularizers \cite{hubernorm,WolhbergRodriguez2007IRN}.
		We remark that there are a variety of other flavours of regularizers available, such as those that utilize convolutions \cite{ChenYunjin2014RLTo} or a trained neural network \cite{AmosBrandon2016ICNN,MukherjeeSubhadip2020Lcrf}.
	} 
	\tcr{While smooth regularizers are also used \cite{NuytsJ.2002Acpp}, one may be interested in a non-smooth regularizer \cite{ArridgeSimon2019Sipu,BenningMartin2018Mrmf}, such as TV, but, be it for computational or theoretical reasons \cite{GazzolaLandanman2020KmfIP,SherryFerdia2020}, require a smooth approximation instead, for which there are various approaches to achieve this \cite{hubernorm,WolhbergRodriguez2007IRN}.}
	The balance between the data fidelity and regularizer in \eqref{eq:var-reg-general} is controlled by the regularization parameter $\alpha\geq 0.$
	It is crucial to determine a suitable value of $\alpha,$ as a poor choice could lead to a noise dominated or oversmoothed reconstruction \cite{HansenPerChristian2010Dip}.

	There are a variety of existing techniques to determine an appropriate regularization parameter value, such as the discrepancy principle \cite{HansenPerChristian2010Dip}, generalized cross validation \cite{GolubGeneH1979GCaa}, or the L-curve criterion \cite{HansenPerChristian2010Dip}.
	\tcb{Depending on the choice of regularizer and algorithm used to solve \eqref{eq:var-reg-general}, efficient methods that yield appropriate parameter values can be achieved \cite{GazzolaLandanman2020KmfIP,GAZZOLA2014180,Langer2017APSTV,Shang2021RPSLRMR}.}
	In particular, there is no one-method-fits-all and rather each method works under different assumptions to varying degrees of success \cite{AnzengruberStephanW2010Mdpf,BAKUSHINSKII1984181,BoneskyThomas2009Mdpa,EnglInverseBook,GockenbachMarkS.2018Otco,HansenPerChristian2010Dip}.
	
	An alternative is machine learning, wherein an optimal parameter is found by minimizing some appropriate loss function.
	This can be achieved via bilevel learning - a popular data-driven approach to determine hyperparameters \cite{ArridgeSimon2019Sipu,CrockettCaroline2022BMfI,DelosReyesJC2016BPLf,KunischKarl2013ABOA} which sits in the wider class of bilevel optimization \cite{ColsonBenoît2007Aoob,SinhaAnkur2018ARoB}.
	In this work, we put emphasis on the following class of bilevel learning problems:

	\begin{subequations}
		\label{eq:bilevel}
		\begin{equation}
			\min_{\alpha\in\mathcal P} \left\{\ulfun(\alpha) := \frac{1}{2}\E\left[\norm{\xa(\ynoise) - \xtrue}^2\right]\right\} \label{eq:ul}
		\end{equation}
		\begin{equation}
			\text{subject to }\xa(\ynoise) = \argmin_{x\in\real^n} \left\{\llfun_\alpha(x) := \frac{1}{2}\norm{\op x - \ynoise}^2 + \alpha\reg(x)\right\}, \label{eq:ll}
		\end{equation}
	\end{subequations}
	where $\mathcal P$ is a parameter space \tcb{e.g. $\mathcal P = [0,\infty)$}\tcr{ and  we assume $\op$ and $\reg$ are such that the solution to \eqref{eq:ll} is unique - more details given later}.
	The expectation in \eqref{eq:ul} is the total expectation and, unless specified otherwise in which case it will be denoted by a subscript,  can be taken with respect to, say,  some underlying distribution of the ground truth or noise (that is, however $\op\xtrue$ is corrupted to yield $\ynoise$ in \eqref{eq:inverse-prob}).
	We do not require any properties of these distributions other than the expectations being well defined.
	Minimization problem \eqref{eq:ul} is referred to as the upper level problem, and \eqref{eq:ll} the lower level problem.
	We remark that we demand neither existence nor uniqueness of solutions to the bilevel learning problem \eqref{eq:bilevel}.

	We consider minimizing the expected squared error in \eqref{eq:ul} 
	which is the most popular choice of loss function in bilevel learning \cite{CrockettCaroline2022BMfI}, though other choices have been explored \cite{DelosReyesJC2016BPLf,GeipingJonasMoeller2019PMfD}.
	By minimizing the expected squared error, the determined parameter should perform well on average.

	While it is of theoretical importance to study the expected squared error upper level cost function,  usually in practice we instead have a finite number of training data $ (\xtrue_k , \ynoise_k),$ $k=1,\dots,K.$ 
	In this scenario the upper level cost function will be \tcb{the empirical risk,}
	$$ \frac{1}{2K}\sum_{k=1}^K \norm{\xa(\ynoise_k)  - \xtrue_k}^2,$$
	and the bilevel learning problem would be solved to determine a regularization parameter that performs well  \tcb{on the} training dataset.
	Then, given some unseen measurement data which is similar to the training data, we can expect  a solution to \eqref{eq:bilevel} to be a reasonable choice of parameter and the variational problem \eqref{eq:ll}
	can be solved. 
	
	Although the bilevel learning problem \eqref{eq:bilevel} is phrased to optimize over a scalar $\alpha$, the general framework extends to the multi-parameter setting, where $\alpha$ would instead be interpreted as a vector. For example: to find the weights of a sum of different regularizers \cite{DeLosReyesJ.C2016Tsoo}; the sampling of the forward operator for MRI \cite{SherryFerdia2020}; the weights in the field of experts model \cite{ChenYunjin2014RLTo}; the choice of norm for the data fidelity and regularizer \cite{Chung_EfficientLearningMethods_2022}; the parameters of an input-convex neural network acting as the regularizer \cite{AmosBrandon2016ICNN,MukherjeeSubhadip2020Lcrf}. In this work we are interested in bilevel learning as a regularization parameter choice rule, so remain in the scalar setting.

	Regarding how one solves a bilevel learning problem in general, if the parameter space $\mathcal P$ is low-dimensional, one can efficiently determine optimal parameters via search methods \cite{chunglearnreg,BergstraBengio2012RSfhO}. 
	In the general multi-parameter case, however,  a brute force search is not computationally feasible and other strategies must be employed.
	Many such approaches aim to calculate gradients of $\ulfun$ directly and use a gradient based approach to determine optimal parameters \cite{CrockettCaroline2022BMfI}. 
	Although the minimizer $\xa(\ynoise)$ is in general non-differentiable with respect to the associated parameters, provided the lower level cost function is sufficiently smooth, so-called hypergradients of $\ulfun$ can be derived using the implicit function theorem~\cite{KrantzStevenG2012TIFT,Tappen2009LOMEiC}.	
	An issue with this approach is that exact solutions of the lower level problem are required but are often computed numerically in practice. While results are still promising in spite of this \cite{SherryFerdia2020}, methods that acknowledge this inexactness have also been developed and studied \cite{EhrhardtMatthiasJ.2021IDOf,EhrhardtMatthiasJ2023AIHf,beyondbackprop}.
	While some approaches try to bypass the smoothness assumption on $\llfun_\alpha$ \cite{Ochs2016TechGradBasedBileOptm,Ochs2015BiloptiwithNonsmLower}, said  methods only consider an approximation of the original bilevel problem and so must be handled with care.

	A critical theoretical issue of \eqref{eq:bilevel} is the well-posedness of the learning and the characterization of solutions, which can be used to inform the design of numerical methods.
	In recent years, literature has been developed to address these issues 
	\cite{DavoliElisa2023SCiN,DeLosReyesJ.C2016Tsoo,HollerGernot2018Abaf,KunischKarl2013ABOA}.
	One example is \cite{HollerGernot2018Abaf}, where  the considered parameter space is $\mathcal P = [\underline{\alpha} , \bar \alpha]$ for $0<\underline{\alpha}\leq\bar\alpha <\infty$ chosen a priori. In this setting it is possible to prove stability of the lower level problem and existence of solutions to the upper level problem under certain assumptions. 
	However, in imposing solutions lie in a  closed  interval bounded away from $0$ determined a priori, it is possible that for a given training dataset the determined parameter is suboptimal. 
	
	While a lot of existing theory and numerical methods for solving \eqref{eq:ll} explicitly assume that the regularization parameter is non-zero \cite{ChambolleAntonin2016Aitc,Chung_EfficientLearningMethods_2022,JinLorenzSchiffler2009ENrEsaASM}, which would be in alignment with the setting of \cite{HollerGernot2018Abaf},
	in allowing $\alpha$ to be zero, that is, to consider
	\begin{equation}
		\mathcal P = [0,\infty),\label{eq:p-choice}
	\end{equation}
	we will see that it is possible for $0$ to be a solution to \eqref{eq:bilevel} for even the most simple problems.
	We commit to the choice of \eqref{eq:p-choice} from now on.
	Depending on the application, the choice of $\alpha=0$ can lead to degeneracy of \eqref{eq:ll} such as non-uniqueness of minimizers. Furthermore, 
	for an optimal parameter to be $0$ may be an indication that the chosen regularizer is not well suited for the problem setting.
	Determining natural conditions which guarantee $0$ is not a solution to \eqref{eq:bilevel} is therefore crucial to not only exclude these degenerate cases, but also to characterize suitability of regularizers for a given application.
	Additionally, since the optimized parameter in \eqref{eq:bilevel} is the regularization parameter of a variational model, understanding when $0$ is not a solution will help establish the validity of bilevel learning as a mathematically sound regularization parameter choice strategy, since positivity of the determined parameter is a necessary property  \cite{EnglInverseBook}.
	Various works have contributed towards conditions that  guarantee $0$ is not a solution to \eqref{eq:bilevel} \cite{DavoliElisa2023SCiN,DeLosReyesJ.C2016Tsoo,KunischKarl2013ABOA} and, while primarily considering the pointwise denoising setting, wherein $\op=I$ and there is no expectation in \eqref{eq:ul}, have considered regularizers such as generalized Tikhonov \cite{KunischKarl2013ABOA}, TV-like and their \tcb{Huber} counterparts  \cite{DeLosReyesJ.C2016Tsoo}, and more recently a broad class of lower semicontinuous regularizers \cite{DavoliElisa2023SCiN}.
	
	To ease notation, we will sometimes suppress the dependence of the reconstruction on the observed measurement, that is, denote $\xa(\ynoise)$ simply as $\xa,$ depending on what we want to emphasize.

	\subsection{Our contribution}\label{sec:positivity}
	In this work we provide a new sufficient condition to guarantee that $0$ is not a solution to the bilevel learning problem \eqref{eq:bilevel} that is applicable to  inverse problems with an injective forward operator, rather than just the denoising setting. 
	In particular, the new condition ensures that a generalized directional derivative of $\ulfun$ at $0$ is strictly negative for the class of lower level cost functions $\llfun_\alpha$ for which the regularizer $\reg$ is real-valued, bounded below, convex, and continuously differentiable.
	We also provide an extension to the setting where the upper level cost function is the expected predicted risk and the forward operator is invertible.
	Full statements of the main results are provided in Section \ref{sec:main} and, after some preliminary results stated in Section \ref{sec:ll-properties}, are proved in Section \ref{sec:main-result-proof}.

	We show that, in the pointwise setting, data $(\xtrue,\ynoise)$ that satisfy a condition commonly used to deduce positivtiy of optimal parameters \cite{DavoliElisa2023SCiN,DeLosReyesJ.C2016Tsoo}  immediately satisfy our new condition, which is empirically shown in Section \ref{subsec:numerics} to be a better characterization of positivity.
	In allowing an expectation in the upper level problem, our setting is in contrast to existing work \cite{DavoliElisa2023SCiN,DeLosReyesJ.C2016Tsoo,KunischKarl2013ABOA}  where only a pointwise problem is considered.
	We show not only  will our condition always be satisfied in a realistic denoising setting, but it can completely characterize positivity for certain applications.

	\section{Main results}\label{sec:main}
	The choice of regularizer is problem specific, as what constitutes as a suitable reconstruction varies between applications. 
	In general, $\reg$ should attain a large evaluation for an $x$ that exhibits undesirable properties.
	For the denoising application we have $\op=I,$ and are trying to improve upon the noisy measurement $\ynoise.$ In particular, $\reg$ should deem $\ynoise$ less desirable than the ground truth $\xtrue.$
	Thus, it is natural to assume that 
	\begin{equation}
		\reg(\xtrue) < \reg(\ynoise).\label{eq:heuristic-reg-cond}
	\end{equation}  
	Indeed, \eqref{eq:heuristic-reg-cond} has been considered in recent works \cite{DavoliElisa2023SCiN,DeLosReyesJ.C2016Tsoo} to deduce positivity of solutions of the bilevel learning problem.

	While we do not demand it here, typically the regularizer $\reg$ involves a norm and in particular is an even function.
	Consequently, for a fixed $\xtrue$, \eqref{eq:heuristic-reg-cond} is inherently circular around the origin and as such may fail to fully characterize the $\ynoise$ for which $0$ is not a solution to \eqref{eq:bilevel}. 
	While our condition will encompass applications with  an injective forward operator, to give an intuition of how it compares to \eqref{eq:heuristic-reg-cond}, in the denoising setting our condition will read as requiring the linearization of $\reg$ around $\ynoise$ evaluated at $\xtrue$ to be smaller than $\reg(\ynoise)$ to conclude that $0$ is not a solution to \eqref{eq:bilevel}.
	To represent this linearization, we will find it useful to work with Bregman distances, which are defined as follows.
	\begin{definition}[Bregman distance]
		For a differentiable convex function $\psi:\real^n\to \real$ with gradient $\nabla \psi$, the Bregman distance is defined as
		\begin{eqnarray*}
			\breg_\psi (x, \tilde x) := \psi(x) - \psi(\tilde x) - \langle  \nabla \psi(\tilde x) , x-\tilde x\rangle.
		\end{eqnarray*}
	\end{definition}
	Bregman distances can be considered a generalization of the squared Euclidean norm, and have nice properties such as  convexity in the first argument and non-negativity \cite{BurgerMartin2015BDiI}.
	We remark that while some definitions demand $\psi$ be strictly convex \cite{Censor1981}, we do not require that here as convexity provides all the properties we need for the scope of this work.
	We now give a few examples of Bregman distances.
	\begin{example}\label{ex:bregman-distances1} 
		Let $\reg(x) =\frac{1}{2}\norm{L x}^2 $ where $L\in\real^{p \times n}.$ One can show that the Bregman distance is given by
		\begin{equation*}
			\breg_\reg (x,z) = \frac{1}{2}\norm{L (x - z)}^2.
		\end{equation*}
		For the case $n=1, L=I,$ a graph of $\reg$ and Bregman distances $\breg_\reg(\cdot,z)$ for different values of $z$ is provided in Figure~\ref{fig:example-breg-tikh}.
		\begin{figure}
			\centering
			\includegraphics[width=.5\textwidth]{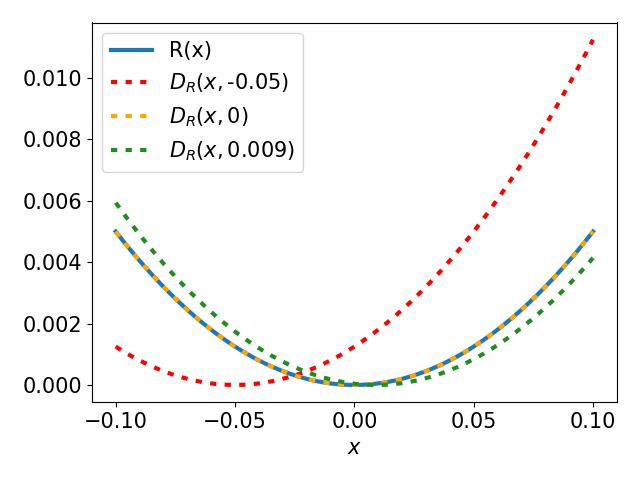}
			\caption{The case $n=1,$  $\reg(x) = \frac{1}{2}\norm{x}^2.$  
				Plot of regularizer evaluations and Bregman distances $\breg_\reg(\cdot,z)$ for different choices of $z$.}   
			\label{fig:example-breg-tikh}
		\end{figure}
	\end{example}
	
	\begin{example}\label{ex:bregman-distances2}    
		Let $n=1$ and  $\reg(x) = \mathrm{hub}_\gamma( x)$ 
		where 
		\begin{equation*}
			\mathrm{hub}_\gamma(x) = \left\{ \begin{array}{ll}
				\abs{x} - \frac{\gamma}{2} & \text{if } \abs{x}\geq \gamma
				\\
				\frac{1}{2\gamma} x^2 & \text{if } \abs{x} < \gamma.
			\end{array}\right.
		\end{equation*}
		One can show that the Bregman distance is given by 
		\begin{equation*}
			\breg_\reg(x,z) =  
			\left\{
			\begin{array}{ll}
				\left(\mathrm{sign}(x) - \mathrm{sign}(z)\right) x 
				& \text{if } \abs{x}\geq \gamma, \abs{z}\geq\gamma
				\\
				x \left(\mathrm{sign}(x) - \frac{1}{\gamma} z\right) +\frac{1}{2\gamma}z^2- \frac{\gamma}{2}
				& \text{if } \abs{x}\geq \gamma, \abs{z}<\gamma
				\\
				x \left(\frac{1}{2\gamma}x - \mathrm{sign}(z)\right) + \frac{\gamma}{2}
				& \text{if } \abs{x} < \gamma, \abs{z}\geq\gamma
				\\
				\frac{1}{2\gamma}\left(x-z\right)^2 
				& \text{if } \abs{x}< \gamma, \abs{z}<\gamma
			\end{array}
			\right.
		\end{equation*}
		For the case $\gamma=0.01,$ a graph of $\reg$ and Bregman distances $\breg_\reg(\cdot,z)$ for different values of $z$ is provided in Figure~\ref{fig:example-breg-hub}.
		
		\begin{figure}
			\centering
			\includegraphics[width=.5\textwidth]{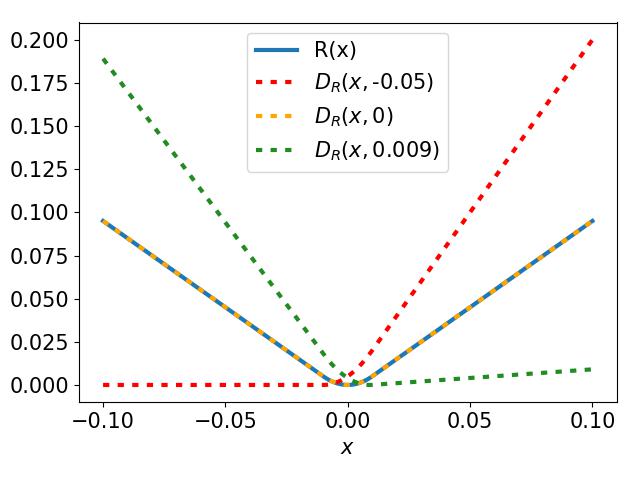}
			\caption{The case $n=1,$  $\reg(x) = \mathrm{hub}_\gamma(x),$ $\gamma=0.01.$  
				Plot of regularizer evaluations and Bregman distances $\breg_\reg(\cdot,z)$ for different choices of $z$.}   
			\label{fig:example-breg-hub}
		\end{figure}
	\end{example}
	
	From the definition of the Bregman distance, it is clear that, when viewed as a function of $x,$ it represents the distance between $\psi(x)$ and the linearization of $\psi$ around $\tilde x$ evaluated at $x.$
	
	Rather than requiring $\op=I$ and \eqref{eq:heuristic-reg-cond}, we merely require that $\op$ is injective and
	\begin{equation}
		\reg(B \xtrue) - \breg_\reg (B \xtrue,\xo) < 
		\reg(B \xo) - \breg_\reg (B \xo,\xo),
		\label{eq:our-theory}
	\end{equation}
	where $B:= (\op^T\op)^{-1},$
	to deduce that $0$ is not a solution to \eqref{eq:bilevel}.
	A full statement of the result is provided in Theorem~\ref{thm:expec}.
	Since $\op$ is  injective, $\op^T\op$ is invertible and $\xo$ is the unique least-squares solution and so \eqref{eq:our-theory} is well-defined.
	An injective forward operator captures various relevant applications, such as certain convolutions and the Radon transform \cite{Markoe2006RadonTransform,Gestur2006IXtRt}.
	If we are in the denoising setting and \eqref{eq:heuristic-reg-cond} is satisfied, we immediately get that \eqref{eq:our-theory} is also satisfied by the non-negativity of the Bregman distance.
	Figure~\ref{subfig:theory_better} compares, for a fixed $\xtrue$ and denoising application, how the region of possible $\xo$ for which  \eqref{eq:heuristic-reg-cond} and  \eqref{eq:our-theory} are satisfied differ.
	Figure~\ref{subfig:bregman} provides a geometric interpretation of the new condition for a non-denoising application.
	\begin{figure}
		\centering
		\begin{subfigure}[b]{0.45\textwidth}
			\centering
			\includegraphics[width=\textwidth]{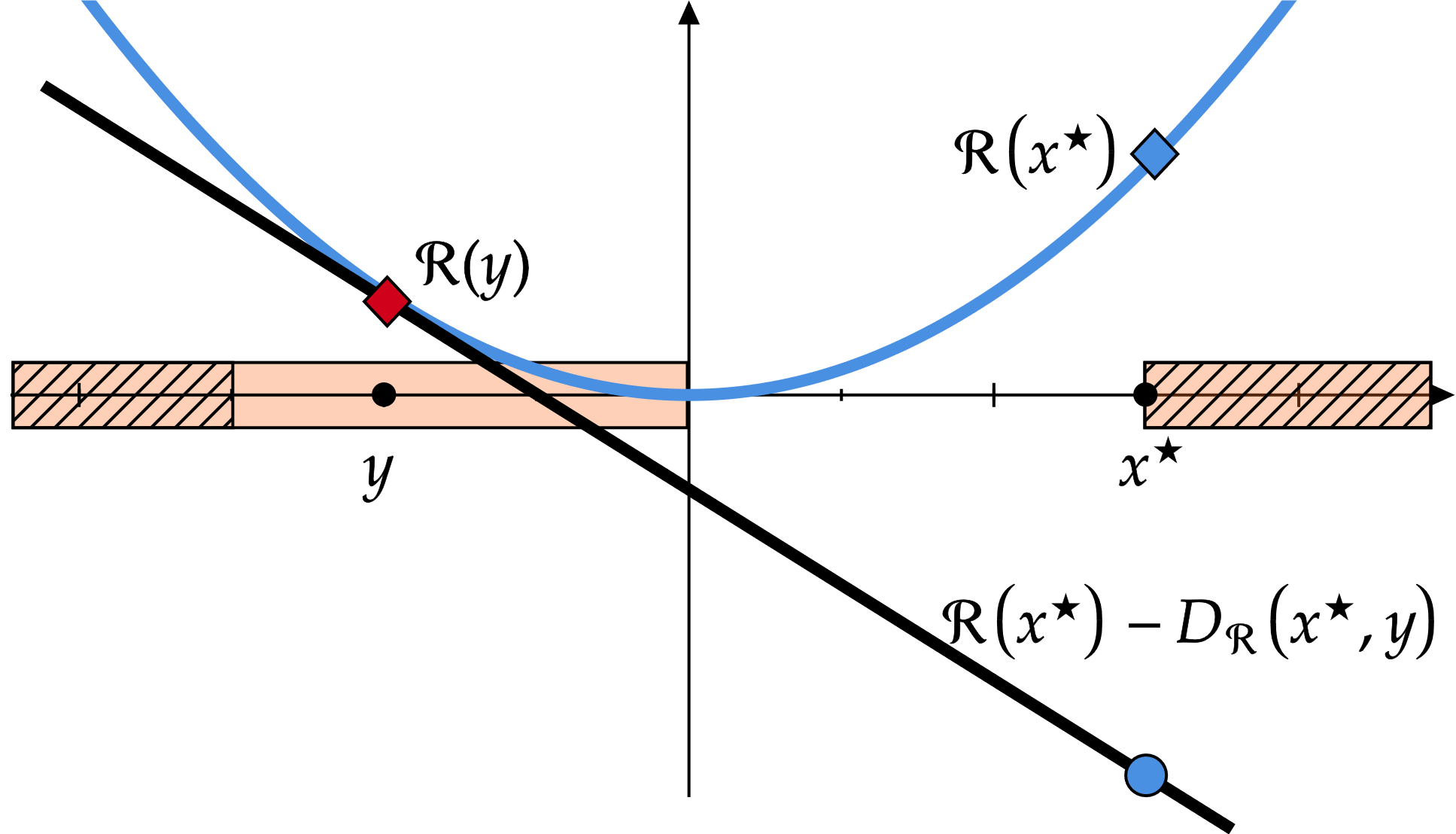}
			\caption{}
			\label{subfig:theory_better}
		\end{subfigure}
		\hfill
		\begin{subfigure}[b]{0.45\textwidth}
			\centering
			\includegraphics[width=\textwidth]{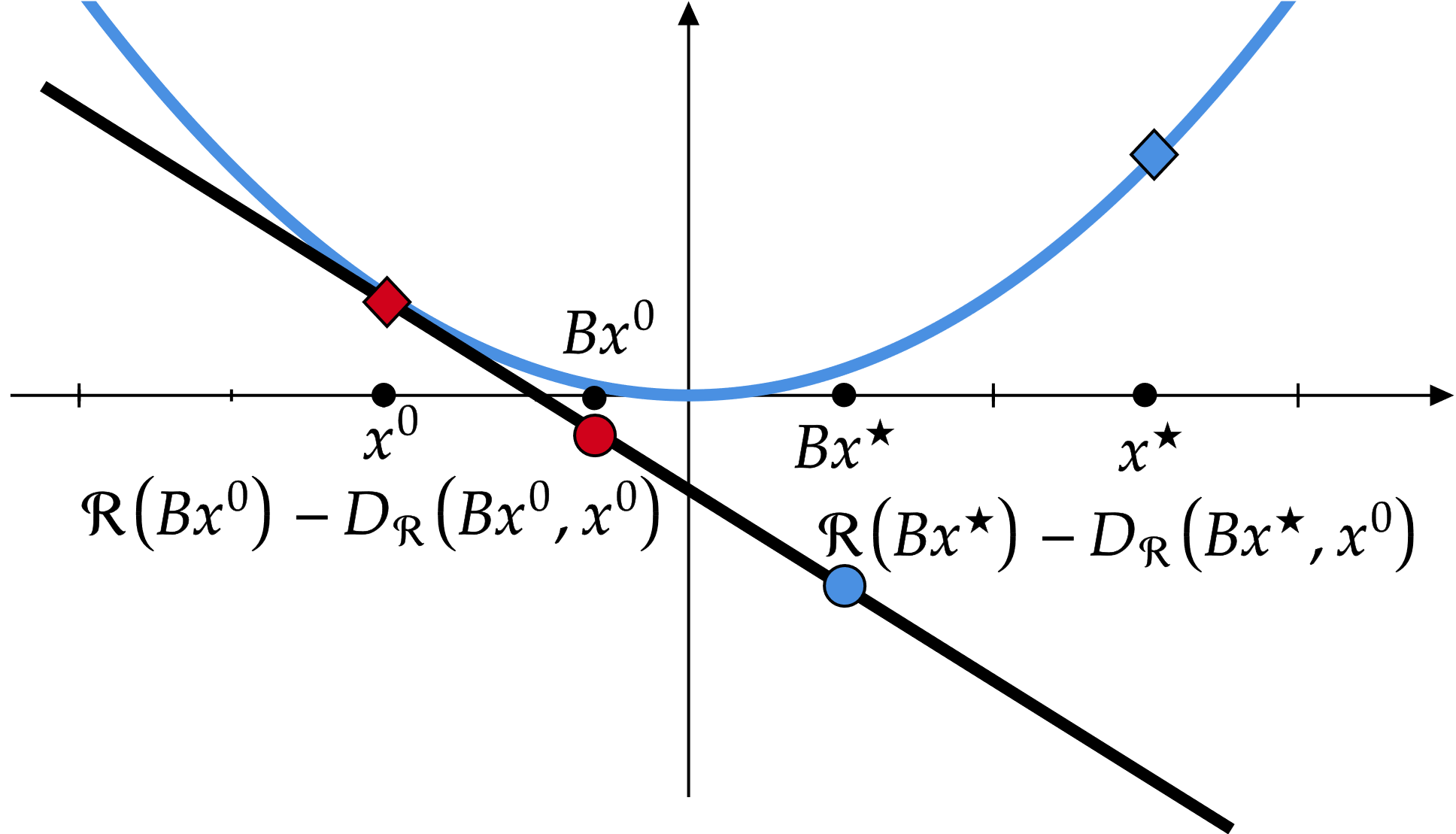}
			\caption{}
			\label{subfig:bregman}
		\end{subfigure}
		\caption{The case $n=1,$  $\reg(x) = \frac{1}{2}\norm{x}^2.$  
			\tcb{(a) shows a} plot of regularizer evaluations and illustration of the new condition in the case $A=1.$ While the old condition demands that the red diamond is higher than the blue diamond, the new condition only requires that the red diamond is higher than the blue circle. Regions of $y$ for which the old and new conditions are satisfied are indicated by the  striped and orange regions on the horizontal axis respectively. We see that the two conditions describe different regions and the striped region is a subset of the shaded region.
			\tcb{(b) shows a} plot of regularizer evaluations and illustration of the new condition in the case $A=\sqrt{3}.$ The new condition only demands that the red circle is higher than the blue circle.}   
		\label{fig:bregman}
	\end{figure}

	We now provide a motivating example illustrating that \eqref{eq:our-theory} can completely characterize  whether $0$ is a solution to \eqref{eq:bilevel}.
	\begin{example}\label{ex:perfect-characterization}
		Consider the denoising setting with Tikhonov regularization, $\reg = \frac{1}{2}\norm{\cdot}^2,$ and a single data sample.  In this setting the lower level solution is given analytically as $\xa = \ynoise/(1+\alpha).$
		Further, assume that $\norm{\ynoise}\neq 0,$ $\langle\ynoise,\xtrue\rangle >0,$ and a solution to \eqref{eq:bilevel} exists.
		We claim that $0$ is not a minimum if and only if $\eqref{eq:our-theory}$ is satisfied.
		
		Now, the solution to \eqref{eq:bilevel} will either be at the boundary or interior of $\mathcal P.$ Evaluation of the upper level at the boundary yields
		\begin{equation*}
			\ulfun(0) = \frac{1}{2}\norm{\ynoise - \xtrue}^2.
		\end{equation*}
		Optimal solutions $\bar\alpha$ in the interior will satisfy $0=\ulfun'(\bar\alpha),$ from which one can show that 
		\begin{equation}
			\bar \alpha = \frac{\norm{\ynoise}^2}{\langle \ynoise,\xtrue\rangle} - 1,\label{eq:example-param}
		\end{equation}
		which is not strictly positive in general.
		
		Suppose $0$ is not a minimum of $\ulfun$. Then  $\ulfun(0)>\ulfun(\bar\alpha)$ with $\bar\alpha>0,$ and it follows from \eqref{eq:example-param} that  $\bar\alpha>0$  if and only if
		\begin{align*}
			\frac{1}{2}\norm{\xtrue}^2 - \frac{1}{2}\norm{\ynoise - \xtrue}^2 < 
			\frac{1}{2}\norm{\ynoise}^2,
		\end{align*}
		that is, \eqref{eq:our-theory} is satisfied.  
		
		Suppose \eqref{eq:our-theory} is satisfied. 
		By the above this is equivalent to $\bar \alpha>0.$ 
		To deduce that $0$ is not a minimum, it remains to show that $\ulfun(0)>\ulfun(\bar\alpha).$
		Indeed, the associated upper level cost of $\bar\alpha$ is
		\begin{equation*}
			\ulfun(\bar\alpha) = \frac{1}{2}\norm{\xtrue}^2 - \frac{1}{2}\frac{\langle\ynoise,\xtrue\rangle^2}{\norm{\ynoise}^2}
		\end{equation*}
		and, since $\bar\alpha>0,$  it follows from \eqref{eq:example-param} that 
		\begin{align*}
			\frac{1}{2}\left(\frac{\langle\ynoise,\xtrue\rangle}{\norm{\ynoise}} - \norm{\ynoise} \right)^2
			&> 0 \\
			\iff 
			\frac{1}{2}\norm{\ynoise - \xtrue}^2
			&> \frac{1}{2}\norm{\xtrue}^2
			- 
			\frac{1}{2}\frac{\langle\ynoise,\xtrue\rangle^2}{\norm{\ynoise}^2},	
		\end{align*}
		that is, $\ulfun(0)>\ulfun(\bar \alpha) .$
		
		To summarize, for a denoising problem with  Tikhonov regularization,  \eqref{eq:our-theory} is satisfied if and only if $0$ is not a solution to \eqref{eq:bilevel}.
	\end{example}

	
	The main result is the following.
	
	\begin{theorem}[Positivity of the bilevel learning problem solution]\label{thm:expec}
		Let $\op$ be injective and let $\reg$ be convex, bounded below, and continuously differentiable. If 
		\begin{equation} 
			\E \left[ \reg(B\xtrue) - \breg_\reg(B\xtrue,\xo(\ynoise))\right] <
			\E \left[	\reg(B\xo) - \breg_\reg(B\xo(\ynoise),\xo(\ynoise))\right] \label{eq:main-assumn-expec-og}
		\end{equation}		
		and
		\begin{equation*}
			\E\left [	\reg(B\xtrue) + \breg_\reg(B \xo(\ynoise),\xo(\ynoise))  \right] < \infty,
		\end{equation*}		
		then $0$ is not a solution to \eqref{eq:bilevel}.
	\end{theorem}	
	
	The proof is provided in Section \ref{sec:main-result-proof}.
	
	\tcb{Since Theorem~\ref{thm:expec} can be considered a characterization of whether a regularizer is appropriate for a given application, ensuring \eqref{eq:main-assumn-expec-og} is satisfied could inform the design of new, or augmentation of existing regularizers for specific applications.
		Furthermore, we will later see that the difference between both sides of \eqref{eq:main-assumn-expec-og} will correspond to the steepness of a generalized directional derivative of $\ulfun$ at $0.$ Thus selecting $\reg$ such that not only is \eqref{eq:main-assumn-expec-og} satisfied, but}
	\begin{equation*}\tcb{
			\E\left[\reg(B \xtrue) - \breg_\reg (B \xtrue,\xo)\right] \ll 
			\E\left[\reg(B \xo) - \breg_\reg (B \xo,\xo)\right]}
	\end{equation*}
	\tcb{can be an indication that the choice of regularizer  will yield noticeable improvement of reconstruction quality compared to the unregularized reconstruction.}

	We now state an immediate result of Theorem~\ref{thm:expec} for the pointwise setting, wherein $(\xtrue,\ynoise)$ is fixed and so there will be no expectation in the upper level \eqref{eq:ul}.
	
	\begin{corollary}
		\label{cory:pointwise}
		Let $\op$ be injective and let $\reg$ be convex, bounded below, and continuously differentiable.  
		If $\xtrue\in\real^n$ and $ \ynoise\in\real^m$ are fixed and 
		\begin{equation*}
			\reg(B \xtrue) - \breg_\reg (B \xtrue,\xo) < 
			\reg(B \xo) - \breg_\reg (B \xo,\xo),
		\end{equation*}
		then $0$ is not a solution to  \eqref{eq:bilevel}.
	\end{corollary}
	
	\begin{remark}
		For the special case of $\op=I$ and $\reg(x) = \frac{1}{2}\norm{Lx}^2$ where $L\in\real^{p\times n},$ Corollary~\ref{cory:pointwise} is proven in \cite[Proposition~3.1]{KunischKarl2013ABOA} where an equivalent condition, $\langle L^TL \ynoise, \xtrue - \ynoise\rangle>0,$  is assumed. 
	\end{remark}
	
	Another corollary of Theorem~\ref{thm:expec} is that, for a realistic denoising problem, we are guaranteed that $0$ is not a solution to \eqref{eq:bilevel}.
	\begin{corollary}\label{cor:sym-breg}
		Let $\op=I$ and let $\reg$ be convex, bounded below, and continuously differentiable.
		If $\xtrue$ is fixed and $\ynoise = \xtrue + \epsilon$ where $\E_\epsilon[\epsilon]=0$, then \tcb{we have the following.}
		\begin{enumerate}[label=(\roman*)]
			\item \tcb{Condition \eqref{eq:main-assumn-expec-og} is equivalent to}
			\begin{equation}
				0< \E_{\epsilon}\left[
				\breg_R(\ynoise,\xtrue) + \breg_\reg(\xtrue,\ynoise)\label{eq:sym-breg}
				\right].
			\end{equation}
			\item \tcb{If $\reg$ is strictly convex, then $0$ is not a solution to \eqref{eq:bilevel} almost surely.}
		\end{enumerate}						
	\end{corollary}
	The proof of Corollary~\ref{cor:sym-breg} is provided in Section~\ref{sec:main-result-proof}.
	
	\begin{remark}
		\tcb{    The quantity in the expectation in \eqref{eq:sym-breg} is called the symmetric Bregman distance. For details regarding properties and use of the symmetric Bregman distance in optimisation see, for example, \cite{Benning2017GDIBDF,BurgerM.2007EefB}.}
	\end{remark}
	
	Now the bilevel learning problem \eqref{eq:bilevel}, and thus the result of Theorem~\ref{thm:expec},  is specific to the expected squared error upper level cost, 
	\tcb{where access to clean samples $\xtrue$ is required.
		If direct access to $\xtrue$ is not be possible,  one could instead consider a cost function based on the measurement space \cite{DeledalleSUGAR,Fehrenbach2015BIDUGT, Hintermuller2022DADPSTGVBL,Sixou2021Arppnba,Zhang2020BLNSOAMFFD}.
		For this reason} we will also consider an alternative upper level cost function, namely, the expected predictive risk, 
	\begin{equation*}
		\ulfun(\alpha) = \E \left[ \frac{1}{2} \norm{\op \xa- \op\xtrue}^2
		\right],
	\end{equation*}
	where we will require that $\op$ is invertible.
	\tcb{
		We leave the consideration of other cost functions as future work.}
	In this setting, the associated bilevel learning problem is
	\begin{subequations}\label{eq:bilevel-pred-risk-og}
		\begin{equation}
			\min_{\alpha\in \mathcal P} \E\left[ \frac{1}{2}\norm{\op\xa-\op\xtrue}^2\right],\label{eq:bilevel-pred-risk-ul-og}
		\end{equation}
		\begin{equation}
			\text{subject to }	\xa = \arg\min_{x\in\real^n} \left\{\frac{1}{2} \norm{\op x-\ynoise}^2 + \alpha \reg(x)  \right\} .
		\end{equation}
	\end{subequations}
	Using Theorem~\ref{thm:expec} and the invertibility of $\op$, positivity of solutions  of the bilevel learning problem \eqref{eq:bilevel-pred-risk-og} can be established.
	
	\begin{theorem}\label{thm:positivity-deblur}
		Let $\op$ be invertible and let $\reg$ be convex, bounded below, and continuously differentiable. If  
		\begin{equation*}
			\E\left[	\reg( \xtrue) - \breg_\reg(\xtrue,\xo(\ynoise) )\right] <
			\E\left[\reg(\xo(\ynoise))\right]
		\end{equation*} and \begin{equation*}
			\E\left[\reg( \xtrue)\right] <\infty,
		\end{equation*}
		then $0$ is not a solution to \eqref{eq:bilevel-pred-risk-og}.
	\end{theorem}
	The proof of Theorem \ref{thm:positivity-deblur} is provided in Section~\ref{sec:main-result-proof}. 
	We have an immediate corrollary for the pointwise setting, wherein there is no expectation in the upper level \eqref{eq:bilevel-pred-risk-ul-og}.
	\begin{corollary}
		\label{cory:pointwise-deblur}
		Let $\op$ be invertible and let $\reg$ be convex, bounded below, and continuously differentiable.  
		If $\xtrue\in\real^n$ and $ \ynoise\in\real^m$ are fixed and 
		\begin{equation*}
			\reg( \xtrue) - \breg_\reg ( \xtrue,\xo) < 
			\reg( \xo) ,
		\end{equation*}
		then $0$ is not a solution to  \eqref{eq:bilevel-pred-risk-og}.
	\end{corollary}

	Before we prove the main results we first cover in Section \ref{sec:ll-properties} some fundamental results and properties of the lower level problem.

	
	\section{Preliminaries}\label{sec:ll-properties}
	We start by stating relevant definitions and properties of the lower level cost function.
	In particular, we require properties that are sufficient for existence and uniqueness of solutions to the lower level problem, as well as continuity of reconstructions with respect to the regularization parameter.
	The following definitions are taken from \cite{BeckAmir2017Fmio} and \cite{ChambolleAntonin2016Aitc}.

	\begin{definition}[Minimizer] We say that $\hat x$ is a global minimizer of $\psi:\real^n \to \real$ if	$\psi(\hat x) \leq \psi(x)$ for all $x\in\real^n.$
	\end{definition}
	
	\begin{definition}[Bounded below] A function $\psi:\real^n\to \real$ is bounded below if there exists $C\in\real$ such that 
		\begin{equation*}
			\psi(x) \geq C\quad\text{for all }x\in\real^n.
		\end{equation*}
	\end{definition}

	\begin{definition}[Coercive] A function $\psi:\real^n\to \real$ is coercive if
		\begin{equation*}
			\psi(x) \to\infty\qquad\text{as }\norm{x}\to\infty.
		\end{equation*}
	\end{definition}

	\begin{definition}[Convex function] A function $\psi:\real^n\to \real$ is said to be convex if for all $x,z\in\real^n,$
		\begin{equation}
			\breg_\psi(x,z)\geq 0
			.\label{eq:def:proper-convex}
		\end{equation}
		Moreover, $\psi$ is said to be strictly convex if for $x\neq z,$ inequality \eqref{eq:def:proper-convex} is strict.
	\end{definition}
	
	\begin{remark}
		The classical definition of (strict) convexity \cite{PeypouquetJuan2015COiN} is different to the one we use, however it can be easily recovered by the definition of the Bregman distance (e.g. see \cite[Propoisition 3.10]{PeypouquetJuan2015COiN}).
	\end{remark}

	We can now state a classical result regarding existence and uniqueness of minimizers.

	\begin{lemma}\label{lem:minimizers-unique}
		Let $\psi:\real^n\to\real$ be bounded below,  coercive, and strictly convex. Then $\psi$ has a unique global minimizer.
	\end{lemma}		
	\begin{proof}
		Since $\psi$ is convex and real-valued it is continuous  (see \cite[Corollary~10.1.1]{Rockafellar1970ConvAna}).
		Existence follows from the direct method in the calculus of variations \cite{DacorognaBernard2008DMit}. Uniqueness follows by strict convexity.
	\end{proof}

	
	In particular, by the assumptions on $\op$ and $\reg,$ the lower level problem \eqref{eq:ll} admits a unique minimizer,  justifying our choice of notation.
	
	\begin{proposition}[Properties of the lower level cost function]\label{prop:llfun-properties}
		Fix $\alpha\geq 0.$ Let $\op$ be injective and let $\reg$ be convex, bounded below, and continuously differentiable. Then the lower level cost function $\llfun_\alpha$
		is bounded below, coercive, continuous, and  strictly convex. Moreover, it admits a unique global minimizer.
	\end{proposition}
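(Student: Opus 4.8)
The plan is to verify in turn each of the four properties in the statement and then quote Lemma~\ref{lem:minimisers-unique}. Throughout I would abbreviate the data term by $q(x) := \frac{1}{2}\norm{\op x - \ynoise}^2$, so that $\llfun_\alpha = q + \alpha\reg$. \emph{Boundedness below} and \emph{continuity} are immediate: since $\alpha \geq 0$ and $\reg \geq 0$ by Assumption~\ref{assum:reg}, and $q \geq 0$ trivially, we have $\llfun_\alpha(x) \geq 0$ for all $x$, so $\llfun_\alpha$ is bounded below with constant $C = 0$; continuity holds because $q$ is a polynomial in the entries of $x$, $\reg$ is continuous (indeed differentiable) by Assumption~\ref{assum:reg}, and sums and non-negative scalar multiples of continuous functions are continuous.

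For \emph{coercivity}, the key observation is that injectivity of $\op$ forces the smallest singular value $\opsig$ of $\op$ to be strictly positive, whence $\norm{\op x} \geq \opsig \norm{x}$. The reverse triangle inequality then gives $\norm{\op x - \ynoise} \geq \opsig\norm{x} - \norm{\ynoise}$, so $q(x) \to \infty$ as $\norm{x}\to\infty$. Since $\alpha\reg \geq 0$, we conclude $\llfun_\alpha(x) \geq q(x) \to \infty$, and this argument works uniformly for all $\alpha \geq 0$, including the degenerate case $\alpha = 0$.

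For \emph{strict convexity} I would use the Bregman-distance characterisation of convexity from the excerpt. A short expansion of the squared norms, together with $\nabla q(z) = \op^T(\op z - \ynoise)$, yields the standard identity $\breg_q(x,z) = \frac{1}{2}\norm{\op(x-z)}^2$, and injectivity of $\op$ makes this strictly positive whenever $x \neq z$, so $q$ is strictly convex. Since $\reg$ is convex we have $\breg_\reg(x,z) \geq 0$, and because $\alpha \geq 0$ the Bregman distance of the sum satisfies $\breg_{\llfun_\alpha}(x,z) = \breg_q(x,z) + \alpha\breg_\reg(x,z) \geq \breg_q(x,z) > 0$ for $x \neq z$, so $\llfun_\alpha$ is strictly convex. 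Having established all four properties, Lemma~\ref{lem:minimisers-unique} immediately gives existence and uniqueness of a global minimiser.

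None of the steps is genuinely difficult; the proof is essentially a bookkeeping exercise. The one point deserving care — and the closest thing to an obstacle — is that it is injectivity of $\op$, not any property of $\reg$, that supplies both coercivity and strict convexity in the degenerate case $\alpha = 0$, so the argument must not lean on coercivity or strong convexity of the regularizer, which are not assumed.
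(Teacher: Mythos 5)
Your proof is correct and follows exactly the route the paper intends: the paper's own proof simply asserts that the four properties ``follow from standard results in convex analysis'' and then invokes Lemma~\ref{lem:minimisers-unique}, and your write-up supplies precisely those standard arguments (non-negativity for boundedness, the smallest singular value of the injective $\op$ for coercivity, and the Bregman identity $\breg_q(x,z)=\tfrac{1}{2}\norm{\op(x-z)}^2$ for strict convexity, matching the paper's Bregman-based definition of strict convexity). Your closing remark that injectivity of $\op$, not $\reg$, must carry the coercivity and strict convexity when $\alpha=0$ is exactly the right point of care.
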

	\begin{proof}	
		The properties of $\llfun_\alpha$ follow from standard  results in convex analysis.
		Existence and uniqueness of a minimizer follows from Lemma~\ref{lem:minimizers-unique}.
	\end{proof}

	We require continuity of the reconstruction map $\alpha\mapsto\xa$ 
	at $\alpha=0$. 
	Although this is a well studied result \cite{Maso1993GammaConvg,DeLosReyesJ.C2016Tsoo}, we include a full proof for completeness.
	
	
	\begin{lemma}[Convergence of reconstructions at the boundary]\label{lem:convg-at-bdnry} Let $\op$ be injective and let $\reg$ be convex, bounded below, and continuously differentiable.
		If $\{\alpha_k\}\subset (0,\infty)$ satisfies $\lim_{k\to\infty} \alpha_k =0$ then  $\lim_{k\to\infty} \xak = \xo$.
	\end{lemma}
	\begin{proof} 
		By the choice of datafit $\datafit(x)= \frac{1}{2}\norm{\op x - \ynoise}^2$ and injectivity of $\op,$ $\xo$ is the unique minimizer of $\datafit.$ 
		By the minimality of $\xo$
		\begin{align*}
			\datafit(\xo) & \leq \datafit(\xak)
			\intertext{and since $\reg$ is bounded below by some $C\in\real$}
			{\datafit(\xak)} & \leq  \datafit(\xak) + \alpha_k (\reg(\xak) - C)\\
			& = \llfun_{\alpha_k} (\xak)  - \alpha_k C.
			\intertext{By the minimality of $\xak,$}
			\llfun_{\alpha_k} (\xak)  - \alpha_k C & \leq \llfun_{\alpha_k} (\xo)  - \alpha_k C
			\\
			&= \datafit(\xo) + \alpha_k (\reg(\xo) -  C).
		\end{align*}
		In particular, we have $$\datafit(\xo)\leq \datafit(\xak)\leq \datafit(\xo) + \alpha_k (\reg(\xo) - C).$$
		Since $\reg(\xo)$ and $C$ are fixed and $\alpha_k\to 0$, we see that $\lim_{k\to\infty}\datafit(\xak) = \datafit(\xo).$ By continuity of the datafit and \tcb{both minimality and} uniqueness of $\xo$, it follows that 
		$\lim_{k\to\infty} \xak =  \xo$ and we are done.
	\end{proof}

	\section{Proof of the main results}\label{sec:main-result-proof}
	We  aim to characterize when $\alpha=0$ is not a local minimum of the upper level cost function $\ulfun$ by studying local behaviour of $\ulfun$ around $0.$ There are two main challenges towards this: firstly, the reconstruction map $\alpha\mapsto\xa$ is in general non-differentiable \cite{JinBangti2012Srfp} and consequently, without stronger assumptions on the choice of regularizer $\reg$ \cite{Tappen2009LOMEiC,SherryFerdia2020}, the upper level cost function $\ulfun$ is non-differentiable; secondly, the value we are interested in is on the domain boundary of $\ulfun.$ To this end, we work with a generalization of the derivative known as Dini derivatives \cite{ AnsariQamrulHasan.2014Gcnv, GiorgiG,Kannan1996}. More precisely, we consider the upper right Dini derivative.
	
	\begin{definition}[Upper right Dini derivative]
		Let $\mathcal J $ be any real-valued function defined on $[0,\infty)$ and let $\tilde\alpha\geq 0$. We define the upper right Dini derivative of $\mathcal J $ evaluated at $\tilde \alpha$ as
		\begin{equation*}
			\dini \ulfun (\tilde \alpha): = \limsup_{\alpha\to \tilde\alpha_+} \frac{\ulfun(\alpha) - \ulfun(\tilde\alpha)}{\alpha-\tilde\alpha},
		\end{equation*}
		where $\alpha\to\tilde\alpha_+$ denotes the right-hand limit.
	\end{definition}

	We remark that we allow infinite limits in the above definition and so the upper right Dini derivative is always well defined. Dini derivatives follow some standard calculus rules and generalizations of the mean value theorem and Rolle's theorem can be stated~\cite{ AnsariQamrulHasan.2014Gcnv, GiorgiG,Kannan1996}. 
	
	For clarity, we state precisely what we mean for a point to be a local minimum.
	
	\begin{definition}[Local minimum]
		Let $\mathcal J :[0,\infty)\to \real$ be any  function. We say that $\alpha^\star\geq 0$ is a local minimum of $\mathcal J$ if  there exists $\delta>0$ such that
		\begin{equation*}
			\mathcal J(\alpha^\star) \leq \mathcal J (\alpha)  \quad \mbox{for all }  \alpha\in [\alpha^\star - \delta , \alpha^\star + \delta ]\cap [0,\infty).
		\end{equation*}
	\end{definition}

	We now use Dini derivatives to determine behaviour of $\ulfun$ at the domain boundary.
	\begin{lemma}\label{lem:justify-deriv}
		Let $\ulfun:[0,\infty)\to \real$ be any function. If the upper right Dini derivative at $0$ satisfies $\dini\ulfun(0)<0$
		then  $0$ is not a local minimum of $\ulfun$.

	\end{lemma}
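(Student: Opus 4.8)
The plan is to argue by contraposition: rather than producing directly a sequence of parameters at which $\ulfun$ undercuts $\ulfun(0)$, I will show that if $0$ \emph{is} a local minimum of $\ulfun$, then necessarily $\dini\ulfun(0)\geq 0$, contradicting the hypothesis $\dini\ulfun(0)<0$. This keeps the argument at the level of unwinding the two definitions (local minimum and upper right Dini derivative) and avoids having to reason about when a negative $\limsup$ forces eventual negativity of the difference quotient.

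Concretely, suppose $0$ is a local minimum. By definition there is some $\delta\in(0,\infty]$ with $\ulfun(0)\leq\ulfun(\alpha)$ for every $\alpha\in B_\delta(0)\cap[0,\infty]$; in particular this holds for all sufficiently small $\alpha>0$, say all $\alpha$ with $0<\alpha<\delta$. For each such $\alpha$ we then have $\ulfun(\alpha)-\ulfun(0)\geq 0$, and dividing by $\alpha>0$ gives
\begin{equation*}
\frac{\ulfun(\alpha)-\ulfun(0)}{\alpha}\geq 0 .
\end{equation*}
Since the right-hand limit $\alpha\to 0_+$ only involves $\alpha$ in an arbitrarily small right neighbourhood of $0$, and every such $\alpha$ eventually lies in $(0,\delta)$, taking the limit superior preserves the inequality and yields $\dini\ulfun(0)=\limsup_{\alpha\to 0_+}\frac{\ulfun(\alpha)-\ulfun(0)}{\alpha}\geq 0$. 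This contradicts $\dini\ulfun(0)<0$, so $0$ is not a local minimum.

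The argument is elementary and uses no regularity of $\ulfun$ beyond it being real valued on $[0,\infty]$, which is exactly the point of phrasing things with Dini derivatives: later, when $\ulfun$ is only known to be continuous and not differentiable at the boundary, this lemma still applies. I therefore do not expect a genuine obstacle. The only places that need a little care are (i) that the $\limsup$ is always well defined, even when it equals $+\infty$ — permitted, as the excerpt already notes — so the manipulation above is legitimate; and (ii) the bookkeeping between the closed ball $B_\delta(0)$ in the definition of local minimum and the one-sided interval $(0,\delta)$ governing the difference quotient, which is harmless since only small positive $\alpha$ matter on both sides.
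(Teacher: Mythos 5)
Your proof is correct and is essentially the paper's argument run in the contrapositive direction: the paper argues directly that $\dini\ulfun(0)<0$ forces the difference quotient, and hence $\ulfun(\alpha)-\ulfun(0)$, to be negative on some interval $(0,\delta)$, while you assume $0$ is a local minimum and conclude the difference quotient is non-negative near $0$, so its $\limsup$ is non-negative. Both are one-line unwindings of the same two definitions, so there is no substantive difference; if anything the paper's direct version yields the marginally stronger (unused) fact that $\ulfun$ is strictly below $\ulfun(0)$ throughout $(0,\delta)$.
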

	\begin{proof}
		Assume $\dini\ulfun(0)<0.$ We then have existence of $\delta > 0$ such that
		\begin{eqnarray*}
			\frac{\ulfun(\alpha) - \ulfun(0)}{\alpha} < 0 
		\end{eqnarray*}
		for all $\alpha\in(0,\delta)$. It immediately follows that $\ulfun(\alpha) < \ulfun(0)$ in $(0,\delta)$ and so $\ulfun$ is locally strictly decreasing at the domain boundary. In particular, $0$ is not a local minimum of $\ulfun$.
	\end{proof}

	\begin{remark}
		The condition in Lemma~\ref{lem:justify-deriv} is sufficient but not necessary for $0$ to not be a minimizer of $\ulfun.$ Indeed, we will see in Section~\ref{subsec:numerics} that the upper level cost function~\eqref{eq:ul} is non-convex and in particular $0$ may actually be a local minimum, and yet  global minima of \eqref{eq:ul} are achieved at strictly positive parameter values.
	\end{remark}

	We now prove the result of Theorem~\ref{thm:expec} in Section~\ref{subsec:thm:expec} and, using the result of Theorem~\ref{thm:expec}, prove Theorem~\ref{thm:positivity-deblur} in Section~\ref{subsec:forward-op-extension}.
	\subsection{Proof of Theorem~\ref{thm:expec}}\label{subsec:thm:expec}		
	
	Motivated by Lemma~\ref{lem:justify-deriv}, we aim to rewrite $\ulfun(\alpha) - \ulfun(0)$ in a more desirable form, such that the division by $\alpha$ in the Dini derivative will be easier to handle.
	While the bilevel learning problem \eqref{eq:bilevel} involves an expectation, we will find it useful to work with the quantity that we are taking the expectation of and to this end define
	$$
	\tilde \ulfun(\alpha) := \frac{1}{2}\norm{\xa- \xtrue}^2,
	$$
	and so clearly $\ulfun(\alpha) = \E[\tilde\ulfun(\alpha)].$ 
	We focus on calculating $\dini{\tilde\ulfun}(0)$ and build on those results to prove Theorem~\ref{thm:expec}.
	Now, we would ideally like to rewrite $\tilde\ulfun(\alpha)-\tilde\ulfun(0)$ of the form $\alpha h(\alpha)$ for some $h$ such that $\dini{\tilde\ulfun}(0) = \limsup_{\alpha\to 0_+} h(\alpha)<0.$
	While such a form will not be fully achieved, we will determine a form such that the terms that do not have a factor of $\alpha$ will still vanish in the relevant limit.
	More precisely, we want to make a connection between the upper right Dini derivative and regularizer evaluations, and will achieve this using Bregman distances and optimality conditions of $\xa.$ In particular, we will use the following property of the gradient.
	
	\begin{proposition} \label{prop:reg-grad}
		Let $\reg$ be convex, bounded below, and continuously differentiable and let $\xo$ be any least squares solution.
		Then
		\begin{equation}
			\alpha\nabla\reg(\xa) =  \op^T\op \xo - \op^T\op \xa \label{item-grad}.
		\end{equation}			
	\end{proposition}
	\begin{proof}
		By the choice of the data fidelity and differentiability of $\reg,$ we have that
		\begin{equation}0 = \nabla \llfun_\alpha(\xa) = \op^T\op \xa - \op^T \ynoise + \alpha\nabla\reg(\xa).
			\label{eq:item-grad-tmp}
		\end{equation}
		Since $\xo$ is a least squares solution, it satisfies the normal equations
		\begin{equation}
			\op^T\op \xo - \op^T \ynoise=0 \label{eq:item-grad-tmp2}.
		\end{equation}
		Combining \eqref{eq:item-grad-tmp} and \eqref{eq:item-grad-tmp2} yields \eqref{item-grad}.
	\end{proof}					
	

	We now state the form of $\tilde \ulfun(\alpha)-\tilde \ulfun(0)$ that will be utilized.		
	
	\begin{proposition}\label{prop:upper-bound-of-dini}
		Let $\op$ be injective and let $\reg$ be convex, bounded below, and continuously differentiable.
		Then
		\begin{align*}
			\tilde\ulfun(\alpha) - \tilde\ulfun(0)
			&=
			\alpha\langle \nabla \reg(\xa) , B\xtrue - B\xa\rangle - \frac{1}{2} \norm{\xo - \xa}^2.
		\end{align*}
	\end{proposition}
	\begin{proof}
		
		By the definition of $\tilde\ulfun$ we have
		\begin{eqnarray*}
			\tilde\ulfun(\alpha)
			=  \frac{1}{2} \norm{\xa-\xtrue}^2 
			=  
			\frac{1}{2} \norm{\xa}^2 
			-
			\langle \xa ,\xtrue\rangle 
			+
			\frac{1}{2} \norm{\xtrue}^2 
		\end{eqnarray*}
		and in particular
		\begin{align*}
			\tilde\ulfun(\alpha) - \tilde\ulfun(0)
			&= \frac{1}{2} \norm{\xa-\xtrue}^2  - \frac{1}{2} \norm{\xo-\xtrue}^2
			\\
			&= 
			\frac{1}{2} \norm{\xa}^2 
			-
			\frac{1}{2} \norm{\xo}^2
			+
			\langle \xo- \xa ,\xtrue\rangle
			\\
			&= 
			\frac{1}{2} \norm{\xa}^2 
			-
			\frac{1}{2} \norm{\xo}^2
			+
			\langle \xo - \xa ,\xtrue - \xa\rangle + \langle \xo,\xa\rangle - \langle \xa,\xa\rangle
			\\
			&= \langle \xo - \xa, \xtrue - \xa\rangle
			-\frac{1}{2}\norm{\xo}^2 
			+
			\langle \xo , \xa\rangle 
			- \frac{1}{2}\norm{\xa}^2
			\\
			&= \langle\xo - \xa, \xtrue - \xa\rangle - \frac{1}{2}\norm{\xo - \xa}^2.
		\end{align*}
		It remains to show that
		\begin{equation*}
			\langle\xo - \xa, \xtrue - \xa\rangle = 
			\alpha
			\langle \nabla \reg(\xa) , B\xtrue - B\xa\rangle.
		\end{equation*}
		Recall that, by Proposition~\ref{prop:reg-grad}, $ \alpha\nabla\reg(\xa)$ involves $\op^T\op$ which we can freely introduce since it is invertible by the injectivity of $\op.$
		Indeed,
		\begin{equation*}
			\langle\xo - \xa, \xtrue - \xa\rangle 
			= 
			\langle B(\op^T\op)(\xo - \xa), \xtrue - \xa\rangle.
		\end{equation*}
		By the symmetry of $B= (\op^T\op)^{-1}$
		\begin{equation*}
			\langle B(\op^T\op)(\xo - \xa), \xtrue - \xa\rangle = 
			\langle \op^T\op(\xo - \xa),  B (\xtrue - \xa)\rangle.
		\end{equation*}
		Finally, by Proposition~\ref{prop:reg-grad}
		\begin{equation*}
			\langle \op^T\op(\xo - \xa),  B (\xtrue - \xa)\rangle   = \alpha
			\langle \nabla \reg(\xa) , B\xtrue - B\xa\rangle
		\end{equation*}
		and we are done.
	\end{proof}
	
	
	In calculating the upper right Dini derivative of $\tilde\ulfun$ at 0, we see from Proposition~\ref{prop:upper-bound-of-dini} that the quantity
	\begin{equation}
		\liminf_{\alpha\to 0_+} \frac{1}{\alpha} \norm{\xo - \xa}^2 \label{eq:term-in-dini-explode-not}
	\end{equation}
	will be encountered.  As hinted earlier, we will show that \eqref{eq:term-in-dini-explode-not} vanishes. Before we do this, we require two intermediate results.

	\begin{proposition}\label{prop:regdif-lb}
		Let $\reg$ be convex, bounded below, and continuously differentiable and let $\xo$ be any least squares solution. Then
		\begin{equation*}
			\alpha (\reg(\xo) - \reg(\xa)) \geq 	\norm{\op(\xo - \xa)}^2.
		\end{equation*}
	\end{proposition}
	\begin{proof}
		By the convexity of $\reg$ and definition of $\breg_\reg,$
		\begin{align*}
			\alpha(\reg(x)-\reg(\xa))
			&\geq 
			\langle \alpha \nabla\reg(\xa)  , x- \xa\rangle.
		\end{align*}
		By Proposition~\ref{prop:reg-grad},
		\begin{equation*}
			\langle \alpha \nabla\reg(\xa)  , x- \xa\rangle =
			\langle \op^T\op \xo - \op^T\op\xa , x- \xa\rangle
		\end{equation*}
		so in taking $x=\xo$ we see that
		\begin{equation*}
			\alpha (\reg(\xo) - \reg(\xa)) \geq 	\norm{\op(\xo - \xa)}^2
		\end{equation*}
		and we are done.
	\end{proof}
	

	\begin{lemma}\label{lem:justify-ubs}
		Let $f,g:[0,\infty)\to \mathbb{R}$ be functions such that $f(\alpha)\leq g(\alpha)$ for all $\alpha \geq 0.$ Then
		$$
		\limsup_{\alpha\to 0_+} \frac{f(\alpha)}{\alpha} \leq \limsup_{\alpha\to 0_+} \frac{g(\alpha)}{\alpha}.
		$$
	\end{lemma}
	The proof of Lemma~\ref{lem:justify-ubs} is omitted as it follows from the definition of the $\limsup$ and standard arguments in analysis. We now show that \eqref{eq:term-in-dini-explode-not} vanishes.

	\begin{proposition}\label{prop:limsup-results} Let $\op$ be injective and let $\reg$ be convex, bounded below, and continuously differentiable. Then
		\begin{equation*}
			\lim_{\alpha\to 0_+} \frac{1}{\alpha} \norm{\xo - \xa}^2 = 0.
		\end{equation*}
		
	\end{proposition}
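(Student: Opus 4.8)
The plan is to show the claimed limit is zero by controlling $\|\xo - \xa\|^2$ in terms of $\alpha$, using the optimality conditions for the lower level problem together with the convexity of $\reg$. The key observation is that Proposition \ref{prop:many-res}\ref{item-grad} gives $\alpha\nabla\reg(\xa) = \op^T\op(\xo - \xa)$, so $\op^T\op$ provides the natural bridge between the first-order information and the squared distance we want to bound.

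First I would write
\begin{equation*}
\opsig^2 \norm{\xo - \xa}^2 \leq \langle \op^T\op(\xo-\xa), \xo - \xa\rangle = \alpha \langle \nabla\reg(\xa), \xo - \xa\rangle,
\end{equation*}
where $\opsig>0$ is the smallest singular value of $\op$ (positive by injectivity), using Proposition \ref{prop:many-res}\ref{item-grad} in the last equality. Next I would use convexity of $\reg$: the Bregman distance $\breg_\reg(\xo,\xa) \geq 0$ rearranges to $\langle \nabla\reg(\xa), \xo - \xa\rangle \leq \reg(\xo) - \reg(\xa) \leq \reg(\xo)$, where the final bound uses non-negativity of $\reg$ (Assumption \ref{assum:reg}). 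Combining,
\begin{equation*}
\frac{1}{\alpha}\norm{\xo - \xa}^2 \leq \frac{\reg(\xo)}{\opsig^2},
\end{equation*}
so the quantity is uniformly bounded in $\alpha$. To upgrade the boundedness to convergence to zero, I would bring in Lemma \ref{lem:convg-at-bdnry}: as $\alpha\to 0_+$ we have $\xa\to\xo$, hence by continuity of $\reg$, $\reg(\xo)-\reg(\xa)\to 0$; replacing the crude bound $\reg(\xo)-\reg(\xa)\leq\reg(\xo)$ by this quantity gives
\begin{equation*}
0 \leq \frac{1}{\alpha}\norm{\xo - \xa}^2 \leq \frac{\reg(\xo) - \reg(\xa)}{\opsig^2} \to 0,
\end{equation*}
and the squeeze finishes the proof. (If one prefers to avoid referencing $\opsig$ explicitly, the same works with $\langle \op^T\op v, v\rangle \geq \norm{\op v}^2 / \norm{(\op^T\op)^{-1}}$ or any fixed positive lower bound on the spectrum of $\op^T\op$.)

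The main obstacle is simply recognizing that one should not expand $\norm{\xo - \xa}^2$ directly but instead pair it against $\op^T\op(\xo-\xa)$ so that the optimality condition of Proposition \ref{prop:many-res}\ref{item-grad} can be invoked, and then that convexity of $\reg$ converts the resulting inner product $\langle\nabla\reg(\xa),\xo-\xa\rangle$ into the difference $\reg(\xo)-\reg(\xa)$, which vanishes in the limit by Lemma \ref{lem:convg-at-bdnry} and continuity. Everything else is routine: injectivity of $\op$ guarantees $\op^T\op$ is positive definite so the spectral lower bound is strictly positive, and non-negativity of $\reg$ is only needed for the preliminary uniform bound, not the final argument.
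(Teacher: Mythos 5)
Your proposal is correct and follows essentially the same route as the paper: the chain $\sigma_{\min}^2\norm{\xo-\xa}^2 \le \norm{\op(\xo-\xa)}^2 = \alpha\langle\nabla\reg(\xa),\xo-\xa\rangle \le \alpha(\reg(\xo)-\reg(\xa))$, combining Proposition \ref{prop:many-res}\ref{item-grad}, convexity of $\reg$, and injectivity of $\op$, is exactly the paper's key inequality (there obtained by writing the convexity inequality at a general $x$ and then setting $x=\xo$), and both arguments conclude via Lemma \ref{lem:convg-at-bdnry} and continuity of $\reg$. No gaps.
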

	\begin{proof}
		By the non-negativity of $\norm{\xo-\xa}^2/\alpha,$ we immediately have
		\begin{equation*}
			0
			\leq \limsup_{\alpha\to0_+} \frac{1}{\alpha}\norm{\xo-\xa}^2
		\end{equation*} 
		and so the result will follow if we can show that
		\begin{equation}
			\limsup_{\alpha\to0_+} \frac{1}{\alpha}\norm{\xo - \xa}^2 \leq 0.\label{eq:limsup-goal}
		\end{equation}
		Since $\op$ is injective, we have that
		\begin{equation*}
			\norm{\xo - \xa}^2 \leq \frac{1}{\sigma_{\min}^2} \norm{\op(\xo - \xa)}^2,
		\end{equation*}
		where $\sigma_{\min}>0$ is the smallest singular value of $\op.$
		Thus it suffices to show that 
		\begin{equation}
			\limsup_{\alpha\to 0_+} \frac{1}{\alpha} \norm{\op(\xo - \xa)}^2 \leq 0,\label{eq:small-result-tmp}
		\end{equation}
		as then the result will follow by Lemma~\ref{lem:justify-ubs}.
		By Proposition \ref{prop:regdif-lb} and Lemma~\ref{lem:justify-ubs}
		\begin{eqnarray*}
			\limsup_{\alpha \to 0_+} \frac{1}{\alpha} \norm{\op(\xo - \xa)}^2 
			\leq  
			\limsup_{\alpha \to 0_+}\left(\reg(\xo) - \reg(\xa)\right) = 0,
		\end{eqnarray*}
		where the equality follows by the continuity of $\reg$ and Lemma~\ref{lem:convg-at-bdnry}. 
		In particular we have shown \eqref{eq:small-result-tmp} and by Lemma~\ref{lem:justify-ubs} we have \eqref{eq:limsup-goal} and we are done.
	\end{proof}

	
	We are now ready to prove a crucial result for the proof of Theorem~\ref{thm:expec}.
	In particular, we show that the upper right Dini derivative of $\tilde\ulfun$ at $0$ can be given exactly in terms of  Bregman distances and evaluations of the regularizer.
	
	\begin{lemma}\label{lem:dini-is-negtve}
		Let $\op$ be injective and let $\reg$ be convex, bounded below, and continuously differentiable.  Then
		\begin{equation*} 
			\dini{\tilde\ulfun}(0) = \reg(B \xtrue) - \breg_\reg (B \xtrue,\xo) - 
			\reg(B \xo) + \breg_\reg (B \xo,\xo).
		\end{equation*}
	\end{lemma}
	\begin{proof}
		By Proposition~\ref{prop:upper-bound-of-dini},
		\begin{align}
			\dini{\tilde\ulfun}(0)
			&=\nonumber
			\limsup_{\alpha\to 0_+} \frac{\tilde \ulfun(\alpha) - \tilde\ulfun(0)}{\alpha}
			\\
			&= 
			\limsup_{\alpha\to 0_+} \left( \langle \nabla\reg(\xa) , B\xtrue - B\xa\rangle
			- \frac{1}{2\alpha} \norm{\xo - \xa}^2\right).\label{eq:lemtmp}
		\end{align}
		The last term in \eqref{eq:lemtmp} will vanish in the limit by Proposition~\ref{prop:limsup-results}. 
		By the continuity of $\nabla\reg$ and Lemma~\ref{lem:convg-at-bdnry},
		\begin{align*}
			\dini{\tilde\ulfun}(0) &= \langle \nabla \reg(\xo) , B\xtrue - B\xo\rangle
			\\
			& = \langle \nabla \reg(\xo) , B\xtrue - \xo + \xo -  B\xo\rangle
			\\
			&= \reg (B\xtrue) - \reg(B\xtrue) + \langle \nabla \reg(\xo) , B\xtrue - \xo\rangle 
			- \reg(B\xo) + \reg(B\xo) 
			\\ &\qquad- \langle \nabla \reg(\xo) , B\xo - \xo\rangle 
			\\
			&= 
			\reg(B\xtrue) - \breg_\reg(B\xtrue ,  \xo)
			- \reg(B\xo) + \breg_\reg (B\xo,\xo)
		\end{align*}
		and we are done.
	\end{proof}

	Before we prove Theorem~\ref{thm:expec},  we require an analogous result of Lemma~\ref{lem:dini-is-negtve} for the expected case where, unlike the pointwise setting, we will only be able to find an upper bound of $\dini\ulfun(0).$


	\begin{lemma}\label{lem:dini-deriv-negtve-expec}			
		Let $\op$ be injective and let $\reg$ be convex, bounded below, and continuously differentiable. If
		\begin{align}
			\E\left [	\reg(B\xtrue) + \breg_\reg(B \xo(\ynoise),\xo(\ynoise))  \right] < \infty\label{eq:assmn-finite-expec}
		\end{align}		
		then  $\dini\ulfun(0)\leq \E \left[\dini{\tilde\ulfun}(0)\right].$
	\end{lemma}
	\begin{proof}
		The main challenge is to justify swapping the expectation and $\limsup$ in 
		\begin{equation*}
			\dini\ulfun(0) = \limsup_{\alpha\to 0_+} \E
			\left[ \frac{\tilde \ulfun(\alpha) - \tilde\ulfun(0)}{\alpha} \right].
		\end{equation*}
		This can be justified (up to inequality) by the Reverse Fatou Lemma (e.g. see   \cite[Corollary~5.3.2 ]{ResnickSidneyI2014APP}). 
		We now show that the conditions of the Reverse Fatou Lemma~are satisfied, which in this setting requires showing that $(\tilde\ulfun(\alpha) - \tilde\ulfun(0))/\alpha \leq Z$ for some random variable $Z$ independent of $\alpha$ satisfying $\E[|Z|]<\infty.$

		By Proposition~\ref{prop:upper-bound-of-dini} and the definition of the Bregman distance (see also the proof of Lemma~\ref{lem:dini-is-negtve}), we have
		\begin{align*}
			\tilde\ulfun(\alpha) - \tilde\ulfun(0)
			&\leq 
			\alpha\langle \nabla\reg(\xa) , B\xtrue - B\xa\rangle\\
			& = \alpha \left(
			\reg(B\xtrue) - \breg_\reg(B\xtrue,\xa)
			-\reg(B\xa) + \breg_\reg(B\xa,\xa)
			\right).
		\end{align*}
		Since $\reg$ is bounded below by some $C\in\real $ and $\breg_\reg$ is non-negative it follows that
		\begin{equation*}
			\frac{\tilde\ulfun(\alpha) - \tilde\ulfun(0)}{\alpha} \leq h(\alpha):= \reg(B\xtrue)  - C + \breg_\reg(B\xa,\xa) .
		\end{equation*}
		Notice that by the assumptions on $\reg,$ for any $\alpha \geq0$ we have that $h(\alpha)\geq 0.$ 
		By Lemma~\ref{lem:convg-at-bdnry} and the continuity of $\reg$ and $\nabla\reg$,
		\begin{equation*}
			\lim_{\alpha\to 0_+} h(\alpha) = h(0)
		\end{equation*}
		and so there exists $\delta>0$ such that for all $\alpha\in [0,\delta],$ 
		\begin{equation*}
			h(\alpha) \leq h(0)+1=: Z.
		\end{equation*}
		We claim that this choice of $Z$ is appropriate for the Reverse Fatou Lemma.   Indeed, since we are only interested in behaviour of $\ulfun$
		(and consequently $\tilde\ulfun$) at $0$ we can, without loss of generality, restrict $\ulfun$ to $[0,\delta].$
		Notice that $Z > 0$ and so $ \E [|Z|]=\E [Z] <\infty$ by assumption~\eqref{eq:assmn-finite-expec}. 
		It then follows from the Reverse Fatou Lemma that 
		\begin{align*}
			\dini\ulfun(0) & \leq \E \left[
			\limsup_{\alpha\to 0_+} 	\frac{\tilde\ulfun(\alpha) - \tilde\ulfun(0)}{\alpha}
			\right] = \E \left[\dini{\tilde\ulfun}(0)\right]
		\end{align*}
		and we are done.
	\end{proof}				
	
	We can now prove the main result, which for convenience we restate.
	\newtheorem*{thm:main-expec}{Theorem~\ref{thm:expec}}
	\begin{thm:main-expec}[Positivity of the bilevel learning problem solution]
		Let $\op$ be injective and let $\reg$ be convex, bounded below, and continuously differentiable. If 
		\begin{equation}
			\E \left[ \reg(B\xtrue) - \breg_\reg(B\xtrue,\xo(\ynoise))\right] <
			\E \left[	\reg(B\xo) - \breg_\reg(B\xo(\ynoise),\xo(\ynoise))\right] \label{eq:main-assumn-expec}
		\end{equation} and \begin{equation}
			\E\left [	\reg(B\xtrue) + \breg_\reg(B \xo(\ynoise),\xo(\ynoise))  \right] < \infty, \label{eq:main-assumn-extra}
		\end{equation}		
		then $0$ is not a solution to \eqref{eq:bilevel}.
	\end{thm:main-expec}
	\begin{proof}
		By the assumptions we have from Lemma~\ref{lem:dini-deriv-negtve-expec} that $\dini\ulfun(0)\leq \E[\dini{\tilde\ulfun}(0)].$ 
		By assumption~\eqref{eq:main-assumn-expec} and Lemma~\ref{lem:dini-is-negtve}, we see that $\dini\ulfun(0)<0.$ 
		It follows from Lemma~\ref{lem:justify-deriv} that $0$ is not a local minimizer of $\ulfun$ and in particular cannot be a global minimizer.
	\end{proof}
	
	We now show that, in the denoising setting where the measurement has been corrupted by additive noise of mean zero, if the regularizer is strictly convex then we are guaranteed that $0$ is not a solution to \eqref{eq:bilevel}.
	\newtheorem*{cory:sym-breg}{Corollary~\ref{cor:sym-breg}}
	\begin{cory:sym-breg}
		Let $\op=I$ and let $\reg$ be convex, bounded below, and continuously differentiable.
		If $\xtrue$ is fixed and $\ynoise = \xtrue + \epsilon$ where $\E_\epsilon[\epsilon]=0$, then \tcb{we have the following.}
		\begin{enumerate}[label=(\roman*)]
			\item \tcb{Condition \eqref{eq:main-assumn-expec} is equivalent to}
			\begin{equation}
				0< \E_{\epsilon}\left[
				\breg_R(\ynoise,\xtrue) + \breg_\reg(\xtrue,\ynoise)\label{eq:sym-breg2}
				\right].
			\end{equation}
			\item \tcb{If $\reg$ is strictly convex, then $0$ is not a solution to \eqref{eq:bilevel} almost surely.}
		\end{enumerate}				
	\end{cory:sym-breg}
	\begin{proof}
		By the definition of the Bregman distance, \eqref{eq:main-assumn-expec} is also given by
		\begin{align}
			0 &< \E_\epsilon\left[\nonumber
			\langle\nabla\reg(\ynoise),\ynoise-\xtrue\rangle
			\right]
			\\
			&= \E_\epsilon\left[\nonumber
			\langle\nabla\reg(\ynoise) - \nabla\reg(\xtrue) + \nabla\reg(\xtrue),\ynoise-\xtrue\rangle
			\right]
			\\
			&=\E_\epsilon\left[ \label{eq:cor-temp-result}
			\langle\nabla\reg(\ynoise) - \nabla\reg(\xtrue),\ynoise-\xtrue\rangle
			+ \langle\nabla\reg(\xtrue),\ynoise-\xtrue\rangle
			\right].
		\end{align}
		In the denoising setting we have that $\ynoise-\xtrue=\epsilon.$ Since $\xtrue$ is fixed and $\E_\epsilon[\epsilon]=0,$ it follows from \eqref{eq:cor-temp-result} that
		\begin{align*}
			0 & < \E_\epsilon\left[
			\langle\nabla\reg(\ynoise) - \nabla\reg(\xtrue),\ynoise-\xtrue\rangle\right]
			+ \langle\nabla\reg(\xtrue),\E_\epsilon[\epsilon]\rangle
			\\&= 
			\E_\epsilon\left[
			\langle\nabla\reg(\ynoise) - \nabla\reg(\xtrue),\ynoise-\xtrue\rangle\right].
		\end{align*}
		By definition of the Bregman distance, we have
		\begin{equation*}
			\E_\epsilon\left[
			\langle\nabla\reg(\ynoise) - \nabla\reg(\xtrue),\ynoise-\xtrue\rangle\right]
			=
			\E\left[
			\breg_R(\ynoise,\xtrue) + \breg_\reg(\xtrue,\ynoise)
			\right]
		\end{equation*}
		and we have shown \eqref{eq:sym-breg2}.
		Since $\epsilon$ is a continuous random variable we have that $\xtrue\neq\ynoise$ almost surely.
		From the definition of strict convexity it immediately follows that $\breg_\reg(y,\xtrue) >0$ and so \eqref{eq:sym-breg2} is  satisfied almost surely.	 Since $\op=I$ and $\xtrue$ is fixed, the other condition of Theorem~\ref{thm:expec} is trivially satisfied and so $0$ is not a minimum of \eqref{eq:bilevel}  by Theorem~\ref{thm:expec}.
	\end{proof}


	We now provide a sufficient condition for assumption \eqref{eq:main-assumn-extra} to be satisfied.
	\begin{proposition}
		Let $\op$ be injective and let $\reg$ be convex, bounded below, continuously differentiable, and $\beta$-smooth, in that, for any $x,z \in\real^n$
		\begin{equation*}
			\norm{\nabla\reg(x) - \nabla\reg(z)} \leq \beta \norm{x-z}.
		\end{equation*}
		If
		\begin{equation}
			\E\left[ \reg(B\xtrue)  \right]<\infty\quad\text{and}\quad\E\left[ \|{\xo(\ynoise)}\|^2  \right]<\infty,\label{eq:cory-suffic-assmn}
		\end{equation}
		then \eqref{eq:main-assumn-extra} is satisfied.
	\end{proposition}
	\begin{proof}
		By the definition of the Bregman distance it follows from~\cite[Theorem 5.8]{BeckAmir2017Fmio} that $\beta$-smoothness is equivalent to 
		\begin{equation*}
			\breg_\reg(x,z) \leq \frac{\beta}{2} \norm{x-z}.
		\end{equation*}
		It follows that
		\begin{align*}
			\E \left[\reg(B\xtrue) + \breg_\reg( B\xo , \xo)\right]
			& \leq \E \left[\reg(B\xtrue)\right] +
			\frac{\beta}{2}\E\left[ \norm{ (B - I) \xo}^2 \right]
			\\
			& \leq \E \left[\reg(B\xtrue)\right] +
			\frac{\beta\sigma_{\max}^2}{2}\E \left[ \norm{ \xo}^2 \right]
		\end{align*}
		where $\sigma_{\max}\geq 0$ is the largest singular value of $B-I.$ 
		Thus \eqref{eq:cory-suffic-assmn} is sufficient for \eqref{eq:main-assumn-extra} to be satisfied.
	\end{proof}
	
	\subsection{Proof of Theorem~\ref{thm:positivity-deblur}}\label{subsec:forward-op-extension}
	We now prove an analogous result of Theorem~\ref{thm:expec} for the predictive risk upper level cost function
	and an invertible forward operator $\op$, wherein the associated bilevel learning problem is
	\begin{subequations}\label{eq:bilevel-pred-risk}
		\begin{equation}
			\min_{\alpha\in\mathcal P} \E\left[ \frac{1}{2}\norm{\op\xa-\op\xtrue}^2\right], \label{eq:ul-deblur}
		\end{equation}
		\begin{equation}
			\text{subject to }	\xa = \arg\min_{x\in\real^n} \left\{\frac{1}{2} \norm{\op x-\ynoise}^2 + \alpha \reg(x)  \right\}  \label{eq:ll-deblur}.
		\end{equation}
	\end{subequations}
	Using Theorem~\ref{thm:expec} and the invertibility of $\op$, positivity of solutions  of the bilevel learning problem \eqref{eq:bilevel-pred-risk} can be established.
	
	\newtheorem*{thm:positivity-deblur}{Theorem~\ref{thm:positivity-deblur}}
	\begin{thm:positivity-deblur}
		Let $\op$ be invertible and let $\reg$ be convex, bounded below, and continuously differentiable. If				\begin{equation}
			\E\left[	\reg( \xtrue) - \breg_\reg(\xtrue,\xo(\ynoise) )\right] <
			\E\left[\reg(\xo(\ynoise))\right]\label{eq:reg-cond-z}\end{equation} and \begin{equation}
			\E\left[\reg( \xtrue)\right] <\infty,\label{eq:reg-cond-z3}
		\end{equation}
		then $0$ is not a solution to \eqref{eq:bilevel-pred-risk}.
	\end{thm:positivity-deblur}
	\begin{proof}
		Using the invertibility of $\op,$ we intend to rephrase the bilevel learning problem \eqref{eq:bilevel-pred-risk} as a denoising problem and apply Theorem~\ref{thm:expec}.
		
		We first remark that since $\op$ is assumed invertible, $\xo = \op^{-1} \ynoise$
		and more generally the solution $\xa$ to \eqref{eq:ll-deblur} satisfies $\op\xa = \za$, where
		\begin{equation}
			\za = \arg\min_{z\in\real^n} \left\{\frac{1}{2} \norm{z-\ynoise}^2 + \alpha \reg(\op^{-1}z)  \right\}. \label{eq:ll-deblur-z}
		\end{equation}
		By the invertibility of $A$ and assumptions on $\reg$ it follows that $\tilde \reg := \reg\circ \op^{-1}$ is also convex, bounded below, and continuously differentiable.
		Furthermore, if we define $\ztrue:= \op\xtrue$ notice that
		\begin{align*}
			\breg_\reg(\xtrue , \xo) 
			&= \reg(\xtrue) - \langle \nabla\reg(\xo) , \xtrue - \xo\rangle
			\\
			&= \reg(\xtrue) - \langle \op^{-T} \nabla \reg(\xo) , \op\xtrue - \op\xo\rangle
			\\
			&= \tilde\reg(\ztrue) - \langle \nabla\tilde\reg(\ynoise) , \ztrue - \ynoise\rangle
			\\
			&= \breg_{\tilde\reg}(\ztrue , \ynoise).
		\end{align*}	
		Thus, assumption \eqref{eq:reg-cond-z} reads
		\begin{equation}\E \left[ 
			\tilde\reg(\ztrue) - \breg_{\tilde\reg} (\ztrue,\ynoise)
			\right] < \E\left[\tilde \reg(\ynoise) 
			\right] \label{eq:reg-cond-z2}
		\end{equation}		
		and assumption	\eqref{eq:reg-cond-z3} reads
		\begin{equation}
			\E\left[\tilde \reg( \ztrue)\right] <\infty.\label{eq:reg-cond-z4}
		\end{equation}
		Using the new notation, the upper level problem \eqref{eq:ul-deblur} reads
		\begin{equation}
			\min_{\alpha\in\mathcal P} \E\left[ \frac{1}{2}\norm{\za-\ztrue}^2\right]. \label{eq:ul-deblur-z}
		\end{equation}
		In particular, we have phrased the bilevel learning problem \eqref{eq:bilevel-pred-risk} as a denoising bilevel problem \eqref{eq:ul-deblur-z} and \eqref{eq:ll-deblur-z} with regularizer $\tilde \reg$. By the properties of $\tilde \reg$ and inequalities \eqref{eq:reg-cond-z2} and \eqref{eq:reg-cond-z4}, it follows from Theorem~\ref{thm:expec} 
		that $0$ is not a solution to  \eqref{eq:bilevel-pred-risk}.
	\end{proof}

	\section{Numerical Experiments}\label{subsec:numerics}
	We now explore the presented theory with some numerical examples. Although in practice the problem is high-dimensional, for a geometric and visual interpretation of the theory, we consider in Section \ref{subsec:numerics:lowscale} the small dimensional case of $n=2$. Relevant high-dimensional problems are provided in Section \ref{subsec:numerics:largescale}. 
	
	In the following, solutions to the lower level problem are computed either via an analytic closed form solution \tcb{(where no numerical solver is necessary)} or numerically using BFGS \cite{NocedalWright2006NumOp} for \tcb{10000} iterations with backtracking linesearch and early stopping if either the gradient norm evaluates to less than $10^{-8}$ \tcb{or a step length smaller than $10^{-14}$ is considered by backtracking linesearch. 
		We find that early stopping is always achieved.
		We utilize both the SciPy~\cite{Virtanen2020SciPy} and ODL~\cite{Adler2017ODL} Python libraries in our experiments, and our code is available at  \href{https://github.com/s-j-scott/Optimal-Reg-Params-Bilevel}{https://github.com/s-j-scott/Optimal-Reg-Params-Bilevel}.
	}
	
	\subsection{Low-dimensional problems}\label{subsec:numerics:lowscale}

	We explore how well the results of  Theorem~\ref{thm:expec} and Theorem~\ref{thm:positivity-deblur} characterize positivity in the pointwise setting, that is, we consider Corollary~\ref{cory:pointwise} and Corollary~\ref{cory:pointwise-deblur} respectively. 				
	In the following, we consider the area $\Omega:=[-1.6,1.6]\times [-1.6,1.6]\subset \real^2,$ discretised into a $150\times150$ grid.
	Since our results involve $\xo$ and $\xtrue,$ we interpret $\Omega$ as the reconstruction space, rather than the measurement space. 
	
	We fix the ground truth $\xtrue = [1,0.5]^T,$ which will be indicated by a yellow star in the upcoming plots. 
	Considering each point in the grid $\Omega$ as a candidate  $\xo,$  we compute the boundary for when the relevant inequality of  Corollary~\ref{cory:pointwise} or Corollary~\ref{cory:pointwise-deblur} becomes satisfied. If we are in the case $\op=I,$ we may also compute the boundary for when \eqref{eq:heuristic-reg-cond} becomes satisfied. 
	Since the lower level problem requires a measurement $\ynoise$, in order to have a well defined mapping between the $\xo$ and $\ynoise,$ we restrict ourselves to invertible $\op$ in this section.
	We  approximate the solution to the bilevel learning problem with data $(\xtrue , \op\xo)$ by considering parameters
	$$  [\alpha_1=0,\;\alpha_2=10^{-8},\cdots,\;\alpha_{99}=10^3,\;\alpha_{100}=10^7],
	$$
	where $[\log_{10}(\alpha_2),\;\cdots,\log_{10}(\alpha_{99})]$ is a linear discretisation of 98 points between -8 and 3, and select the parameter that achieves the smallest upper level cost. With this, we compute the boundary  in $\Omega$ between the regions where  the numerical bilevel solution is zero and strictly positive.
	A summary of the boundaries and their represented colours and legend names is provided in Table~\ref{tab:numerics-colour-legend}.
	\begin{table}
		\centering
		\caption{\label{tab:numerics-colour-legend}Key for the boundary colours used in numerical examples. The shorthand labels used in the following legends are also indicated.
			Since the results of Corollary~\ref{cory:pointwise} and Corollary~\ref{cory:pointwise-deblur} refer to different bilevel problems, \eqref{eq:bilevel} and \eqref{eq:bilevel-pred-risk-og} respectively, we can re-use the label and colour for both boundaries.}
		\begin{tabular}{lllc}\toprule
			Condition satisfied outside the boundary&Colour&Label&Reference\\\midrule
			$\reg(\xtrue) < \reg(\ynoise)$    &Red & Old&\cite{DavoliElisa2023SCiN,DeLosReyesJ.C2016Tsoo}   
			\\
			$ 
			\reg(B \xtrue) - \breg_\reg (B \xtrue,\xo) < 
			\reg(B \xo) - \breg_\reg (B \xo,\xo)$& Blue&New&Corollary~\ref{cory:pointwise}
			\\
			$ 
			\reg( \xtrue) - \breg_\reg ( \xtrue,\xo) < 
			\reg( \xo) $& Blue&New&Corollary~\ref{cory:pointwise-deblur}
			\\
			Numerical solution to the bilevel problem is not $0$ &Black&Numerical&--  \\
			\bottomrule
		\end{tabular}
	\end{table}

	We consider two forward operators, namely,
	\begin{align*}
		\op_1 = \begin{bmatrix}
			1 & 0 \\ 0& 1
		\end{bmatrix}\quad \text{and}\quad	
		\op_2 = \begin{bmatrix}
			0.7274 & 0.2726 \\ 0.2726 & 0.7274
		\end{bmatrix}.
	\end{align*}
	
	For each forward operator, we consider four different regularizers and see how the boundaries, detailed in the above and summarized in Table~\ref{tab:numerics-colour-legend}, change.
	In particular, we consider a general form of  Tikhonov and  the Huber norm,  
	\begin{equation*}
		\reg(x) = \frac{1}{2}\norm{ L x}^2 \quad \text{and}\quad	\reg(x) = \sum_{i=1}^p \mathrm{hub}_\gamma ([L x]_i)
	\end{equation*}
	respectively, where $L\in\real^{p\times n},$ not necessarily full rank, and 
	\begin{equation*}
		\mathrm{hub}_\gamma(t) = \left\{ \begin{array}{ll}
			\abs{t} - \frac{\gamma}{2} & \text{if } \abs{t}\geq \gamma
			\\
			\frac{1}{2\gamma} t^2 & \text{if } \abs{t} < \gamma.
		\end{array}\right.
	\end{equation*}
	Regarding the choice of $L,$ we will consider both  $L=I\in\real^{2\times 2},$ which will yield standard Tikhonov and Huber norm respectively, and also  $L=[1\;-1]\in\real^{1\times 2}$ which can be interpreted as the discretisation of the first-order finite difference operator for $n=2$ \cite{HansenPerChristian2006Di:m}. 
	For this latter choice of $L,$ we  refer to the regularizers as an $n=2$  analogue of the  $H^1$ seminorm  and   Huber TV respectively.
	
	\subsubsection{On the characterization of positivity}
	We are interested in how well Corollary~\ref{cory:pointwise} characterizes positivity of solutions of the bilevel learning problem \eqref{eq:bilevel}.
	Using the approach outlined above, we can determine the area of the region where the numerical solution to \eqref{eq:bilevel} is 0, and also the area where the condition of Corollary~\ref{cory:pointwise} is violated.
	Should Corollary~\ref{cory:pointwise} perfectly characterize positivity, we would expect both areas to coincide.
	We compute the ratio between these areas for the different $\op$ and $\reg$ mentioned above, and display the results in Table~\ref{tab:numerics-area-increase}. 
	In the denoising setting, we also compute the area where \eqref{eq:heuristic-reg-cond} is violated, to see how the new condition compares.
	In Figure~\ref{fig:numeric-tikh-denoise} we see that, for Tikhonov denoising, Corollary~\ref{cory:pointwise} perfectly characterizes positivity, as we would expect following Example \ref{ex:perfect-characterization}. 
	From Table~\ref{tab:numerics-area-increase}, Corollary~\ref{cory:pointwise} characterizes the positivity of \eqref{eq:bilevel} well for the considered problems, with many area ratios being around 1. 
	Furthermore, we see in Figure~\ref{fig:numer-small-denoise} and Figure~\ref{fig:numer-small-deblur} that some instances where the ratio is close to but not exactly 1 is down to numerical error.
	For the denoising setting, we see in Figure~\ref{fig:numer-small-denoise} that \eqref{eq:heuristic-reg-cond} overestimates the region where $0$ is a solution to \eqref{eq:bilevel} by a factor of 2 to 4.
	Compared to condition \eqref{eq:heuristic-reg-cond}, Corollary~\ref{cory:pointwise} yields a better characterization of positivity, particularly for points far away from $\xtrue$ - as demonstrated in Figure~\ref{fig:numeric-tikh-denoise} and Figure~\ref{fig:numeric-hub-denoise}.

	\begin{table}[!ht]
		\centering
		\caption{\label{tab:numerics-area-increase}Ratio between the area where $0$ is the optimal parameter and area in the reconstruction space where the (old or new) theory condition is violated. 
			Values close to $1$ mean the condition is close to fully characterizing positivity of \eqref{eq:bilevel}.
			Since \eqref{eq:heuristic-reg-cond} is only valid for $\op=I,$ we cannot compare for the $\op\neq I$ case. 
			As we only consider points in $\Omega,$ if the area where a condition is violated extends beyond $\Omega,$ we indicate the case with an asterisk beside the provided number. All numbers are given to 3 decimal points.
		}
		\begin{tabular}{ccllll}
			\toprule 
			\multirow{2}{*}{Problem} &  \multirow{2}{*}{Condition violated} & \multicolumn{4}{c}{Regularizer} \\
			\cmidrule{3-6}
			&& Tikhonov & $H^1$ seminorm & Huber & Huber TV \\\midrule
			Denoising    &New& 1    & 1.016*& 1.172& 0.985*\\
			Denoising    &Old& 4.008& 2.047*& 4.207& 1.985*\\
			Non-denoising&New& 1.028& 0.981*& 1.181& 0.983*\\
			Non-denoising&Old& ---		& 	---		& ---		& 	---		\\
			\bottomrule
		\end{tabular}
	\end{table}

	\begin{figure}
		\centering
		\begin{subfigure}{0.47\textwidth}
			\includegraphics[width=\textwidth]{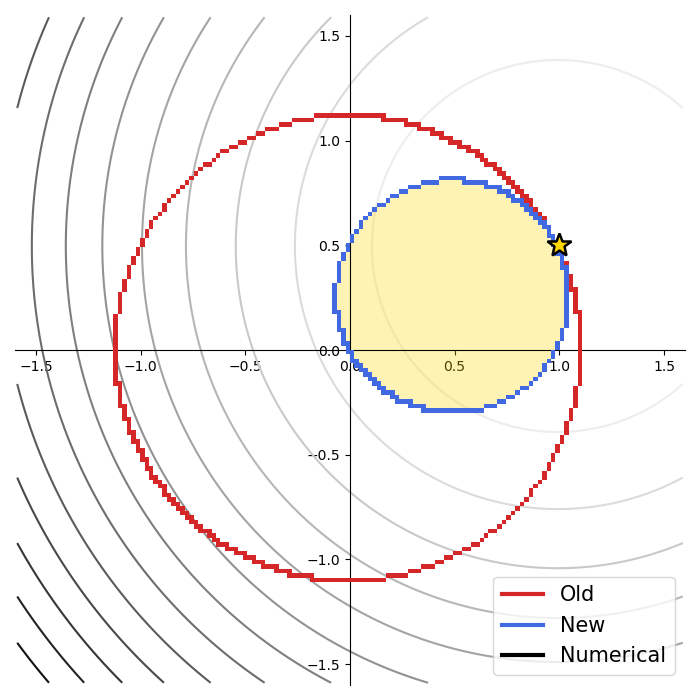}
			\caption{ Tikhonov regularization}\label{fig:numeric-tikh-denoise}
		\end{subfigure}
		\begin{subfigure}{0.47\textwidth}
			\includegraphics[width=\textwidth]{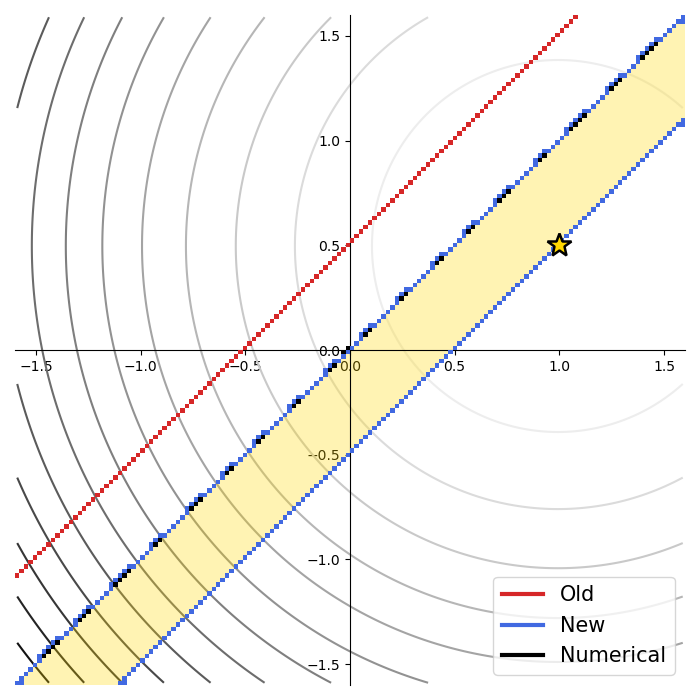}
			\caption{$H^1$ seminorm regularization}\label{fig:numeric-l2grad-denoise}
		\end{subfigure}
		\begin{subfigure}{0.47\textwidth}
			\includegraphics[width=\textwidth]{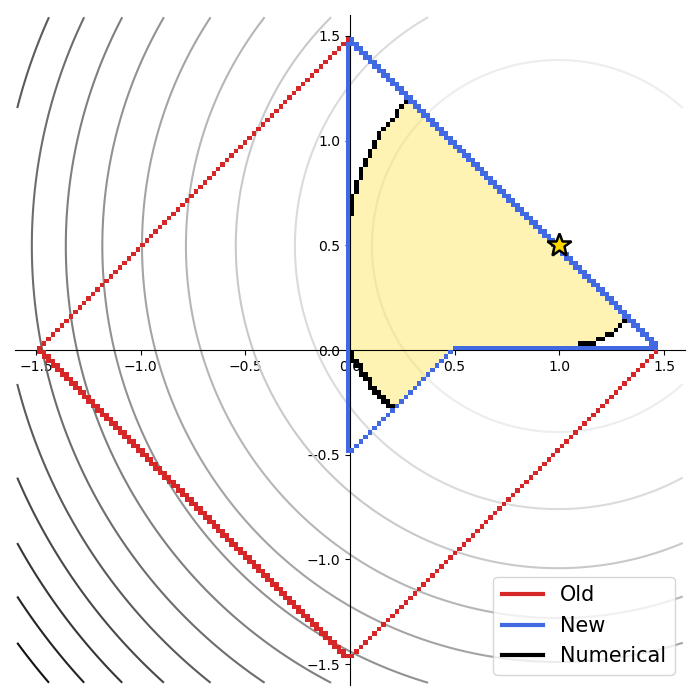}
			\caption{Huber regularization}\label{fig:numeric-hub-denoise}
		\end{subfigure}		
		\begin{subfigure}{0.47\textwidth}
			\includegraphics[width=\textwidth]{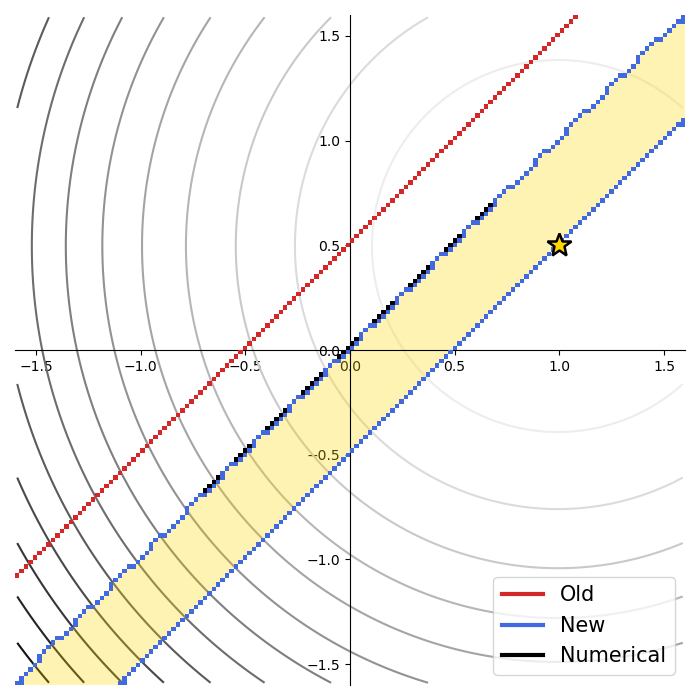}
			\caption{Huber TV regularization}
		\end{subfigure}
		
		\caption{Visualisation of Corollary~\ref{cory:pointwise}. Plots of the reconstruction space $\Omega$ for the trivial forward operator ($\op_1$) setting and various choices of regularizer,  with the condition boundaries as detailed in Table~\ref{tab:numerics-colour-legend}.
			The ground truth $\xtrue=[1,0.5]$ is represented by a yellow star, and level sets of the upper level cost function are visible. 
			The region where $0$ is a solution to \eqref{eq:bilevel} is shaded yellow.
			The choice of regularizer is indicated \tcb{in each subcaption.}  }
		\label{fig:numer-small-denoise}
	\end{figure}


	\begin{figure}
		\centering
		\begin{subfigure}{0.47\textwidth}
			\includegraphics[width=\textwidth]{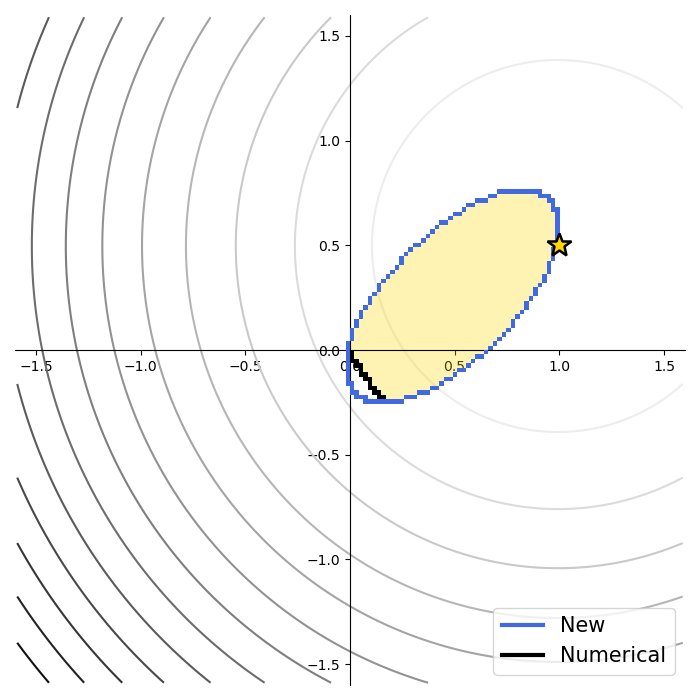}
			\caption{ Tikhonov regularization}
		\end{subfigure}
		\begin{subfigure}{0.47\textwidth}
			\includegraphics[width=\textwidth]{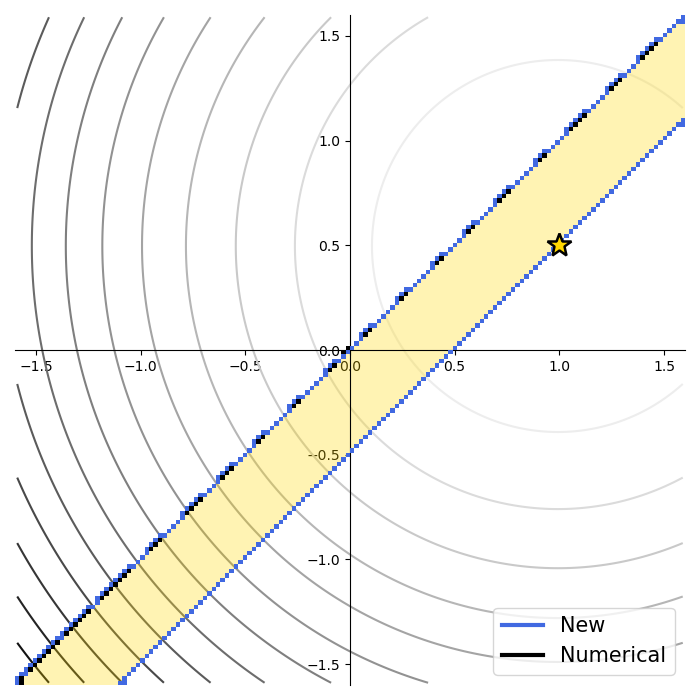}
			\caption{$H^1$ seminorm  regularization}
		\end{subfigure}
		\begin{subfigure}{0.47\textwidth}
			\includegraphics[width=\textwidth]{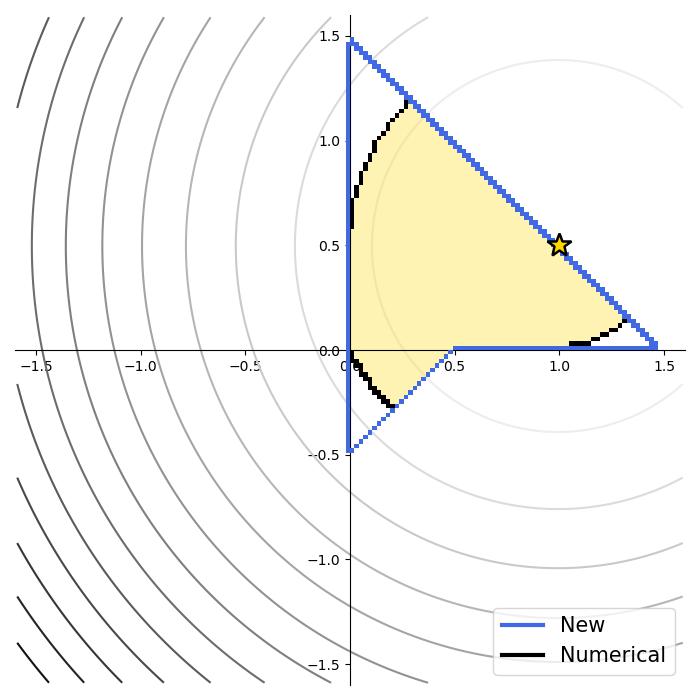}
			\caption{Huber norm regularization}
		\end{subfigure}		
		\begin{subfigure}{0.47\textwidth}
			\includegraphics[width=\textwidth]{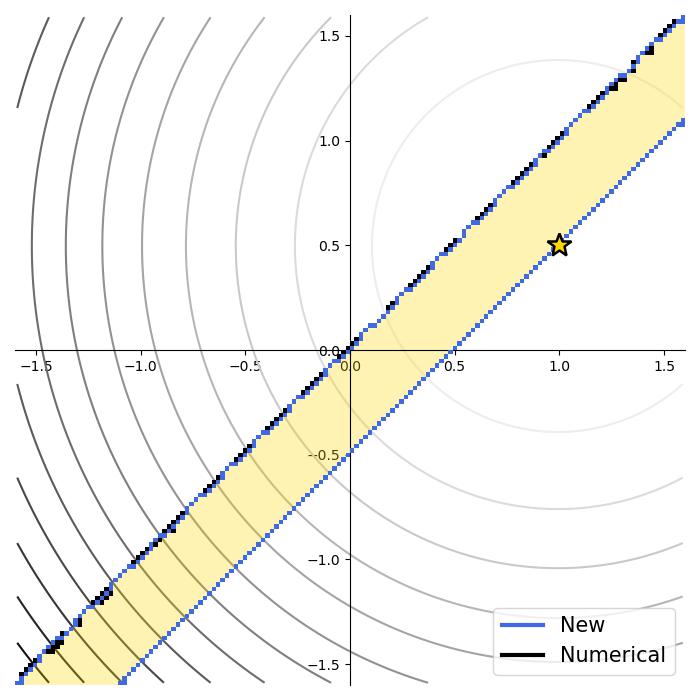}
			\caption{Huber TV  regularization}
		\end{subfigure}
		
		\caption{ Visualisation of Corollary~\ref{cory:pointwise}.
			Plots of the reconstruction space $\Omega$ for the non-trivial forward operator ($\op_2$) setting and various choices of regularizer,  with the condition boundaries as detailed in Table~\ref{tab:numerics-colour-legend}.
			The ground truth $\xtrue=[1,0.5]^T$ is represented by a yellow star, and level sets of the upper level cost function are visible. 
			The region where $0$ is a solution to \eqref{eq:bilevel} is shaded yellow. 
			The choice of regularizer is indicated \tcb{in each subcaption.} }
		\label{fig:numer-small-deblur}
	\end{figure}

	
	\subsubsection{Guaranteed positivity for denoising}			
	
	We now demonstrate the result of Corollary~\ref{cor:sym-breg}, where we are guaranteed that $0$ is not a solution provided that $\op=I,$  $\reg$ is strictly convex, and the noise is additive and of zero mean. We fix ground truth $\xtrue = [1,0]^T$ and generate 1000 noisy realisations by perturbing $\xtrue$ with Gaussian noise of mean $[0,0]^T,$ standard deviation $[0.1,0.1]^T.$
	A plot of the ground truth and noisy realisations is shown in Figure~\ref{fig-cory-positivity-data-centre}. 
	To ensure the regularizer is strictly convex and differentiable, we consider
	\begin{equation*}
		\reg(x) = \frac{\beta}{2} \norm{x}^2 + \sum_{i=1}^n \mathrm{hub}_\gamma (x_i),
	\end{equation*}
	for $\beta=\gamma=0.01.$ 
	For regularization parameters in the linear discretisation of the interval $[0,0.1]$ into 50 points, we plot the associated upper level cost in Figure~\ref{fig-cory-positivity-ul-centre}.
	We see that the optimal parameter is achieved at a strictly positive value.
	We now show that if the assumption on the noise is violated, we are not guaranteed positivity. 
	For the same $\xtrue$ we generate 1000 noisy realisations by perturbing $\xtrue$ with Gaussian noise of mean $[-0.1,0]^T$ and standard deviation $[0.1,0.1]^T.$
	A plot of the ground truth and noisy realisations is shown in Figure~\ref{fig-cory-positivity-data-uncentre}, and the associated upper level cost in Figure~\ref{fig-cory-positivity-ul-uncentre}. 
	We see that $0$ is the optimal parameter in this case 
	which indicates that the assumption of zero mean noise in Corollary~\ref{cor:sym-breg} is tight.

	\begin{figure}
		\centering
		\begin{subfigure}[b]{0.47\textwidth}
			\centering
			\includegraphics[width=\textwidth]{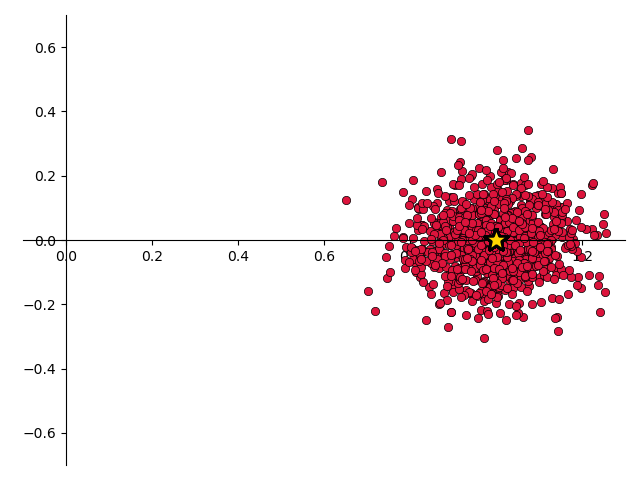}
			\caption{}
			\label{fig-cory-positivity-data-centre}
		\end{subfigure}
		\begin{subfigure}[b]{0.47\textwidth}
			\centering
			\includegraphics[width=\textwidth]{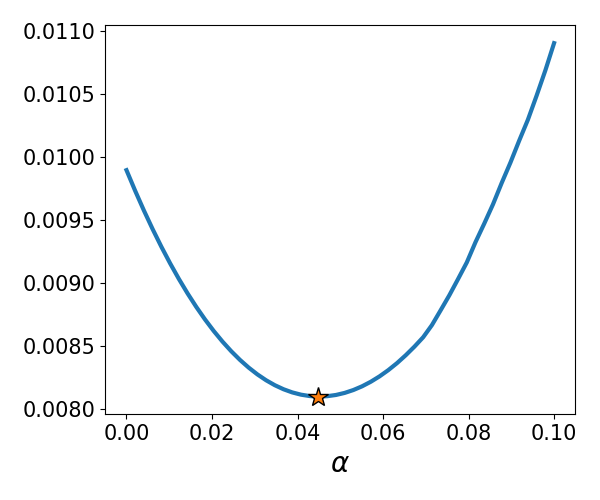}
			\caption{}
			\label{fig-cory-positivity-ul-centre}
		\end{subfigure}
		\begin{subfigure}[b]{0.47\textwidth}
			\centering
			\includegraphics[width=\textwidth]{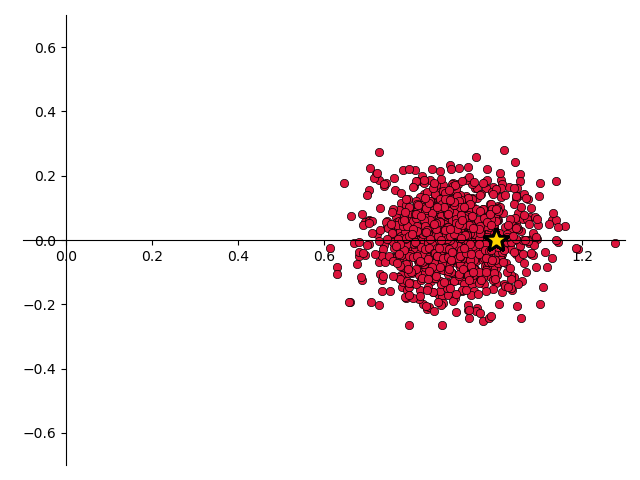}
			\caption{}
			\label{fig-cory-positivity-data-uncentre}
		\end{subfigure}
		\begin{subfigure}[b]{0.47\textwidth}
			\centering
			\includegraphics[width=\textwidth]{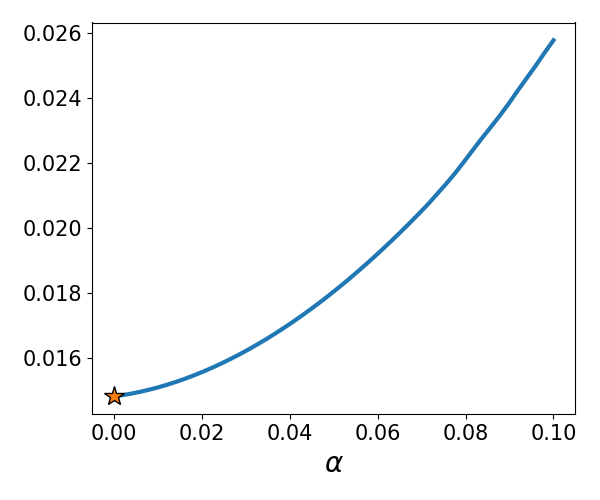}
			\caption{}
			\label{fig-cory-positivity-ul-uncentre}
		\end{subfigure}
		\caption{Visualisation of Corollary~\ref{cor:sym-breg}.
			\tcb{(a) shows the} ground truth $\xtrue = [1,0]^T$ indicated by a yellow star, and 1000 noisy realisations indicated by red dots where the corruption was additive Gaussian noise of mean $[-0.1,0]^T$ standard deviation $[0.1,0.1]^T.$
			\tcb{(b) shows the} squared error upper level cost corresponding to the data in \tcb{(a)}. The optimal regularization parameter is indicated by an orange star  which, since the conditions of Corollary~\ref{cor:sym-breg} are satisfied, is guaranteed to not be  $0.$
			\tcb{(c) shows} a similar to \tcb{plot to} \tcb{(a)}, but the noise has non-zero mean $[-0.1,0]^T,$ and so the result of {Corollary}~\ref{cor:sym-breg} is not applicable.
			\tcb{(d) shows the} squared error upper level cost corresponding to the data in \tcb{(c)}. The optimal regularization parameter is indicated by an orange star. Since the conditions of {Corollary}~\ref{cor:sym-breg} are not satisfied we are not guaranteed that $0$ is not a solution to \eqref{eq:bilevel}. Indeed, $0$ is the optimal parameter in this case, indicating that the assumption of zero mean noise in {Corollary}~\ref{cor:sym-breg} is tight.
		}
	\end{figure}
	
	\subsubsection{Predictive risk upper level}
	
	We now demonstrate the result of Corollary~\ref{cory:pointwise-deblur} where the upper level cost is the predictive risk \eqref{eq:bilevel-pred-risk-ul-og} without the expectation and $\op$ is invertible.
	In particular, we consider the same setup as above with forward operator $\op_2$ and Tikhonov regularization. 
	We plot the region for which $0$ is a solution and boundary for when the condition of Corollary~\ref{cory:pointwise-deblur} holds in Figure~\ref{fig:numeric-pred-small}.
	Since the upper level cost is different to the one considered in the previous numerics, the contour plot of the upper level cost looks very different.
	We see that the condition of Corollary~\ref{cory:pointwise-deblur} characterizes whether $0$ is a solution to \eqref{eq:bilevel-pred-risk-og} well in this setting.
	Despite not being the denoising setting, the region for which the condition of Corollary~\ref{cory:pointwise-deblur} is satisfied is similar in shape to the analogous region in Figure~\ref{fig:numeric-tikh-denoise} for which the condition of Corollary~\ref{cory:pointwise} is satisfied for the case $\op=I.$ This is likely because, due to the invertibility of $\op_2$, the predictive risk bilevel learning problem \eqref{eq:bilevel-pred-risk-og} is related to a certain denoising bilevel learning problem with the squared error upper level cost.

	\begin{figure}
		\centering
		\includegraphics[width=.47\textwidth]{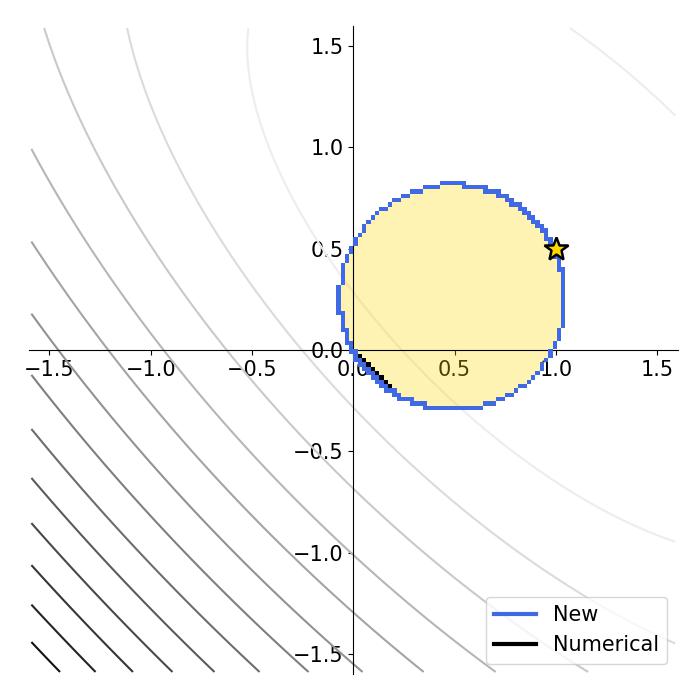}
		\caption{	Visualisation of Corollary~\ref{cory:pointwise-deblur}. Plots of the reconstruction space $\Omega$ for the non-trivial forward operator ($\op_2$) setting, Tikhonov regularization, and predictive risk upper level cost. 
			The ground truth $\xtrue=[1,0.5]^T$ is represented by a yellow star, and level sets of the upper level cost function are visible. 
			The region where $0$ is a solution to \eqref{eq:bilevel} is shaded yellow.} 
		\label{fig:numeric-pred-small}
	\end{figure}

	\subsection{High-dimensional problems}\label{subsec:numerics:largescale}
	
	In this subsection we consider two settings: a denoising scenario where we show that the old condition \eqref{eq:heuristic-reg-cond} can fail to capture $0$ not being a solution to \eqref{eq:bilevel}; and a deconvolution scenario where the existing theory is no longer applicable but we can instead employ both Theorem~\ref{thm:expec} and Theorem~\ref{thm:positivity-deblur} in the pointwise setting, that is, Corollary~\ref{cory:pointwise} and Corollary~\ref{cory:pointwise-deblur} respectively.

	\subsubsection{Denoising application}\label{subsec:numerics-large-denoise}
	We consider the pointwise denoising setting and provide two examples: one where both the new and old conditions are satisfied, and one where only the new condition is satisfied.  
	We consider the $128\times 128$ pixel ground truth image displayed in Figure~\ref{fig:large-denoise-xstar} and its additive Gaussian noise corrupted version, displayed in Figure~\ref{fig:large-denoise-y}, where the noise level,
	\begin{equation*}
		\eta :=  \frac{\norm{\op\xtrue - \ynoise}}{\norm{\op\xtrue}},
	\end{equation*}
	is chosen to be $\eta = 0.05.$
	
	\begin{figure}
		\centering
		\begin{subfigure}{.45\textwidth}
			\includegraphics[width=\textwidth]{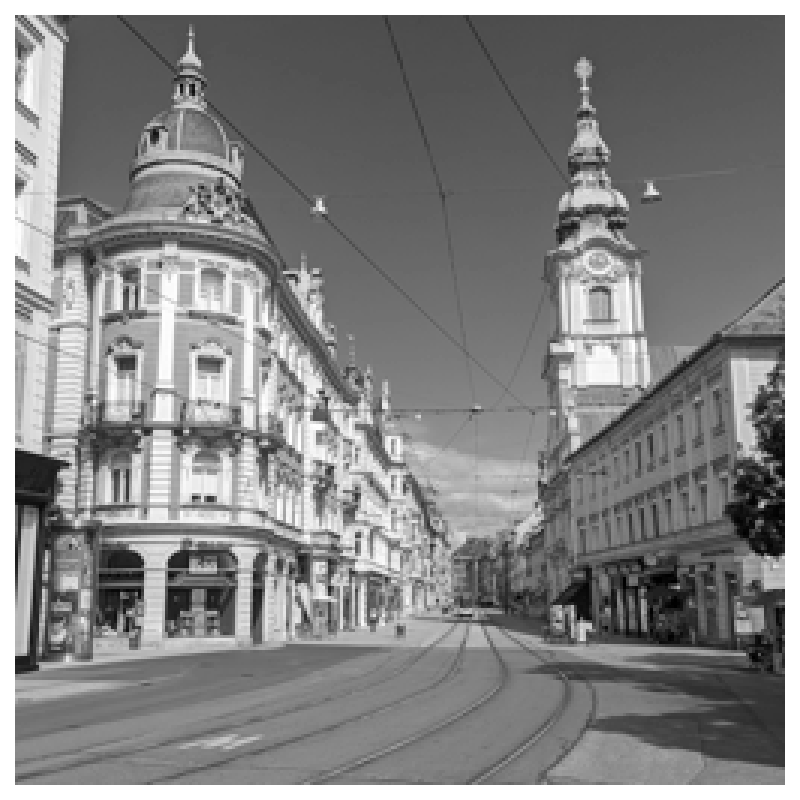}
			\caption{Ground truth $\xtrue$}\label{fig:large-denoise-xstar}
		\end{subfigure}
		\begin{subfigure}{.45\textwidth}
			\includegraphics[width=\textwidth]{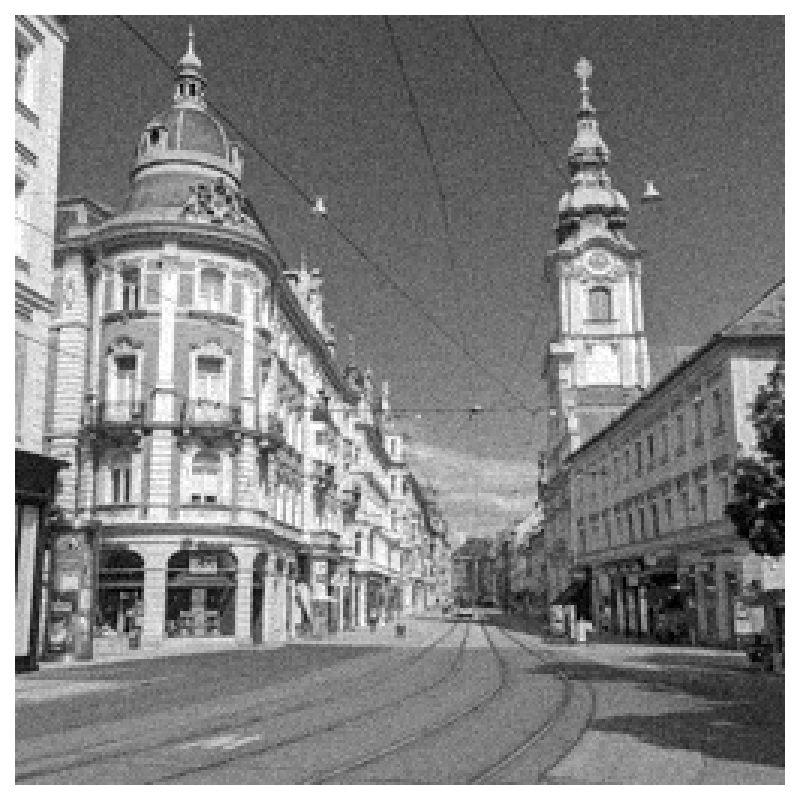}
			\caption{Noisy measurement $\ynoise$}\label{fig:large-denoise-y}
		\end{subfigure}
		\caption{ Relevant data for the large-scale denoising problem. The pixel values are clipped to $[0,255]$ where $0$ is black and $255$ is white. The test image was provided by one of the authors.}
	\end{figure}
	
	We consider both Huber norm and Huber TV regularization, given by
	\begin{equation*}
		\reg(x) = \sum_{i=1}^ n \mathrm{hub}_\gamma(x_i) \quad\text{and}\quad
		\reg(x) = \sum_{i=1}^ p \mathrm{hub}_\gamma([\nabla x]_i) \end{equation*}
	respectively, where $\nabla$  calculates both the horizontal and vertical gradient of $x$ and returns the vectorised concatenation of both results.
	We use the smoothing parameter $\gamma=0.01$ in both cases.
	For Huber norm and Huber TV regularization, we consider parameter spaces $[0,0.1]$ and \tcb{$[0,5]$} respectively, linearly discretised into 50 points. 
	
	\begin{figure}[ht!]
		\centering
		\begin{subfigure}{.45\textwidth}
			\includegraphics[width=\textwidth]{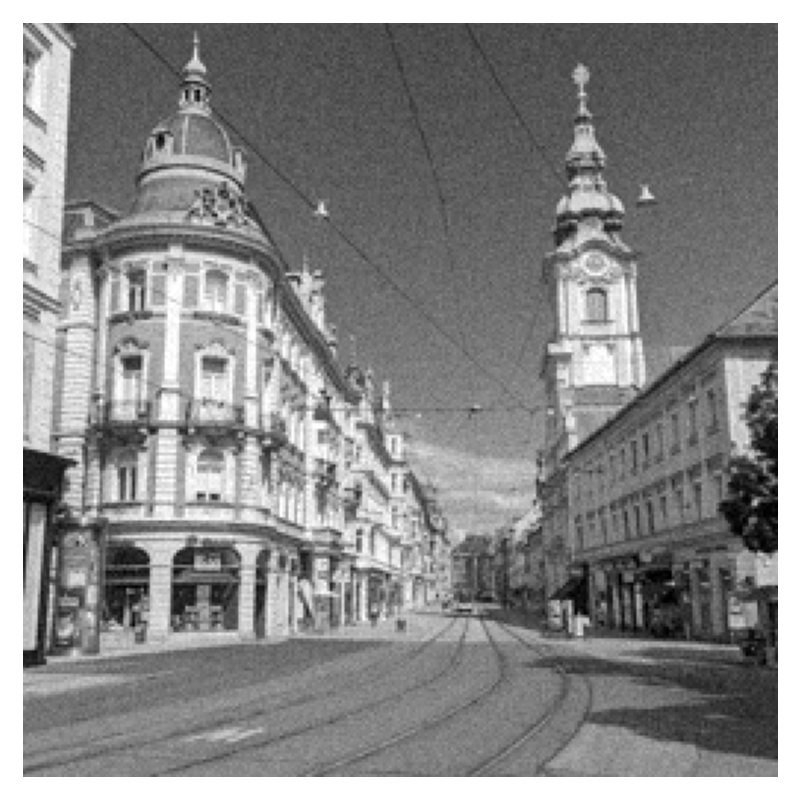}
			\caption{Huber norm reconstruction using the solution to \eqref{eq:bilevel}, $\alpha = 0.073.$ }
			\label{fig:large-denoise-hub-xa}
		\end{subfigure}
		\begin{subfigure}{.45\textwidth}
			\includegraphics[width=\textwidth]{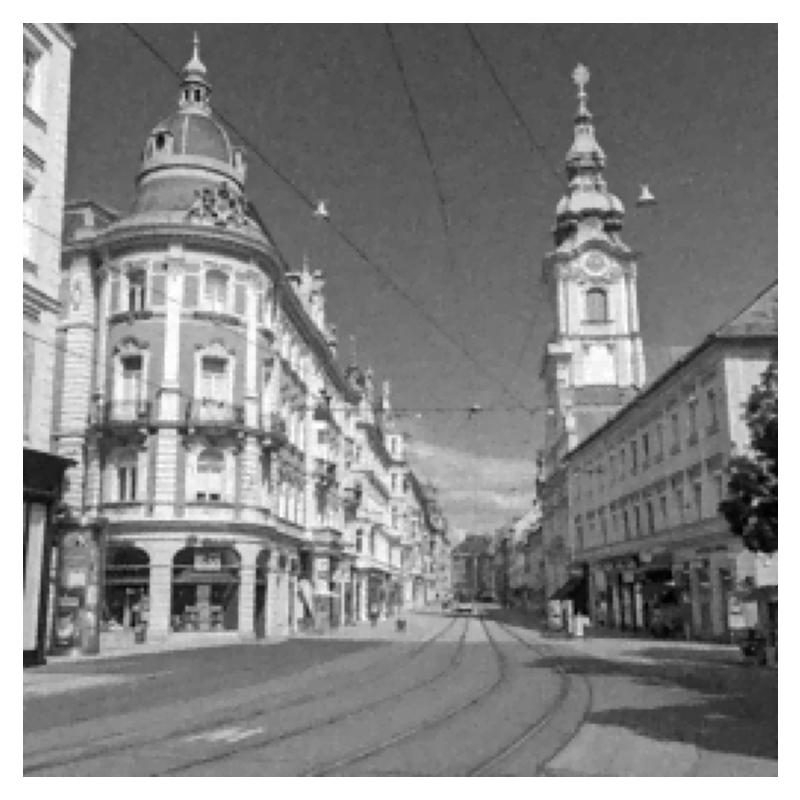}
			\caption{Huber TV reconstruction using the solution to \eqref{eq:bilevel}, $\alpha = 2.755.$}
			\label{fig:large-denoise-tv-xa}
		\end{subfigure}
		\begin{subfigure}{.45\textwidth}
			\includegraphics[width=\textwidth]{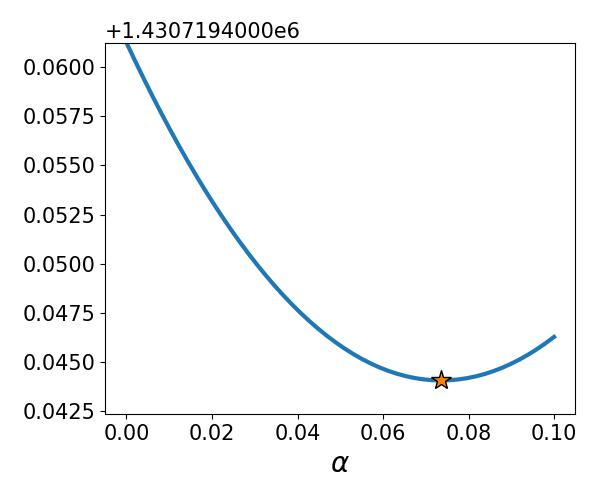}
			\caption{Upper level cost for Huber norm regularization}
			\label{fig:large-denoise-hub-ul}
		\end{subfigure}
		\begin{subfigure}{.45\textwidth}
			\includegraphics[width=\textwidth]{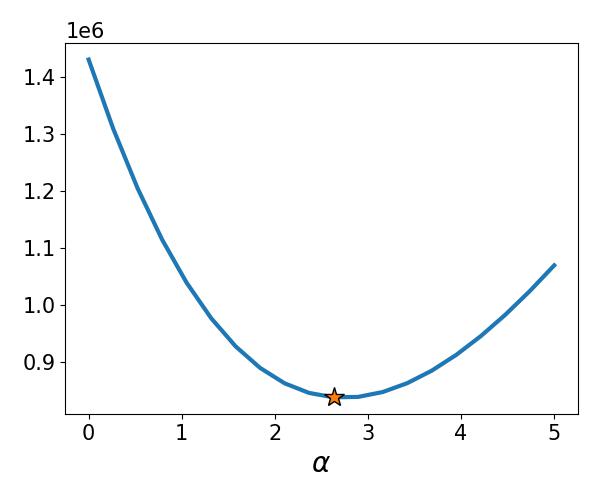}
			\caption{Upper level cost for Huber TV regualarization}
			\label{fig:large-denoise-tv-ul}
		\end{subfigure}
		\caption{Visualisation of Corollary~\ref{cory:pointwise}. Reconstructions and plots of the upper level cost function for the large scale denoising numerical experiment. The pixel values are clipped to $[0,255]$ where $0$ is black and $255$ is white.
			For Huber norm regularization, despite the old condition being violated, the optimal parameter is non-zero.}
	\end{figure}

	For the Huber norm, we get that \tcb{$\reg(\xtrue) = 8051925$} \tcb{to 7 significant figures} \tcb{(7~s.f.) and $\reg(\ynoise)=8051561$ (7~s.f.)} and so the old condition \eqref{eq:heuristic-reg-cond} is not satisfied. 
	However, in Figure~\ref{fig:large-denoise-hub-ul} we see that the optimal regularization parameter is non-zero, despite the associated reconstruction, displayed in Figure~\ref{fig:large-denoise-hub-xa}, looking similar to the noisy measurement.
	We remark that the differences in the upper level cost for the considered parameters are very small.
	The relevant inequality of Corollary~\ref{cory:pointwise} is satisfied, reading \tcb{$8051514 < 8051560$ (7~s.f.).} 
	Thus Corollary~\ref{cory:pointwise} successfully captures the fact that $0$ is not a solution to the bilevel learning problem, whereas the existing theory is inconclusive.
	
	For Huber TV, the assumptions of Corollary~\ref{cory:pointwise} and the old condition \eqref{eq:heuristic-reg-cond} are both satisfied. 
	In Figure~\ref{fig:large-denoise-tv-ul} we see that the optimal regularization parameter is non-zero, and the associated reconstruction, displayed in Figure~\ref{fig:large-denoise-tv-xa}, is an improvement upon on the noisy measurement.
	
	\subsubsection{Deconvolution application}\label{subsec:numerics-large-deblur}
	We now consider a pointwise deconvolution problem where the forward operator is a Gaussian blur with standard deviation \tcb{$1.3$.} 
	The ground truth image is the same as that of Section~\ref{subsec:numerics-large-denoise} and is displayed in Figure~\ref{fig:large-denoise-xstar}. 
	The observed measurement consists of the blurred ground truth corrupted by additive Gaussian noise of noiselevel $\eta=0.05$ and is displayed in Figure~\ref{fig:large-deblur-y}.
	The unregularized reconstruction $\xo$ is displayed in Figure~\ref{fig:large-deblur-xo} and is clearly dominated by noise.
	We consider the squared error bilevel problem \eqref{eq:bilevel} and also the predictive risk bilevel problem \eqref{eq:bilevel-pred-risk-og} in order to apply both Corollary~\ref{cory:pointwise} and Corollary~\ref{cory:pointwise-deblur}.
	We consider Huber TV regularization, and approximate the solution to \eqref{eq:bilevel} by considering the interval \tcb{$[0,3]$} linearly discretised into 50 points and select the parameter that achieves the smallest upper level cost.
	
	\begin{figure}
		\centering
		\begin{subfigure}{.45\textwidth}
			\includegraphics[width=\textwidth]{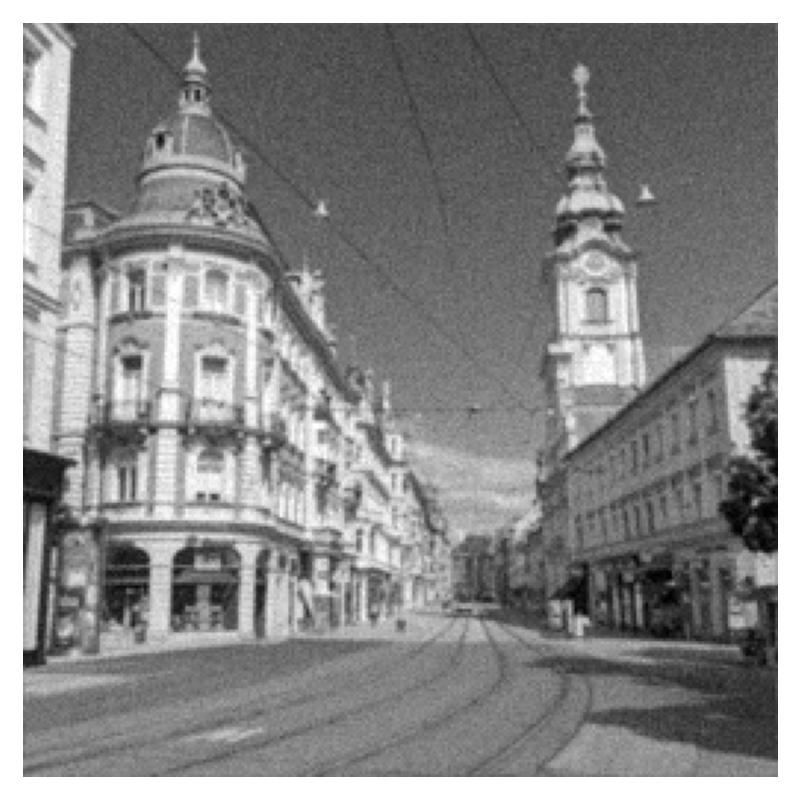}
			\caption{Corrupted blurry measurment $\ynoise$}\label{fig:large-deblur-y}
		\end{subfigure}
		\begin{subfigure}{.45\textwidth}
			\includegraphics[width=\textwidth]{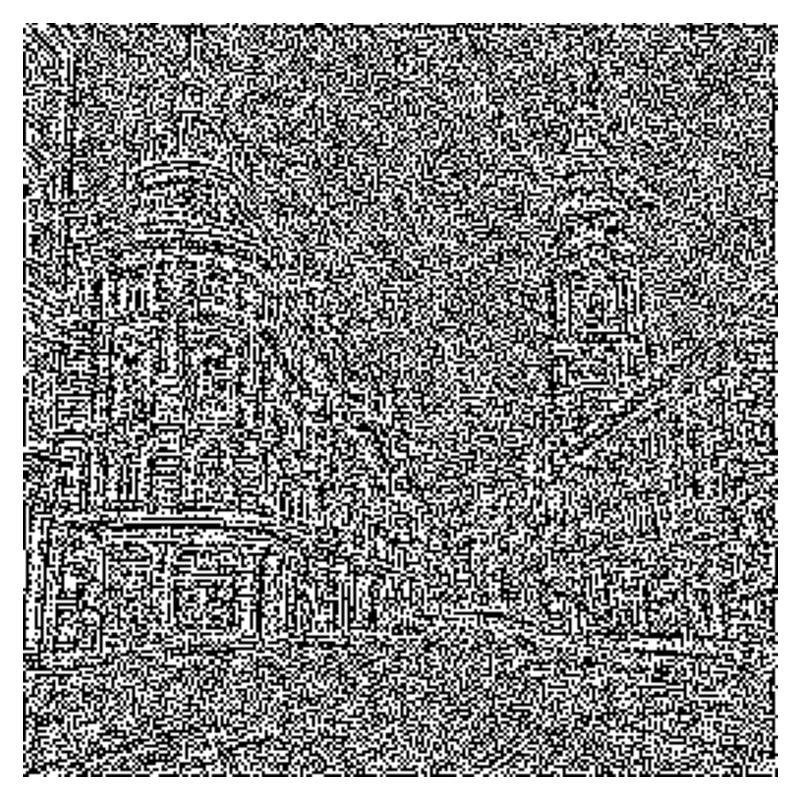}
			\caption{{Unregularized} reconstruction $\xo$}\label{fig:large-deblur-xo}
		\end{subfigure}
		\caption{Relevant data for the large-scale deconvolution problem. The pixel values are clipped to $[0,255]$ where $0$ is black and $255$ is white. We see that the unregularized reconstruction is completely dominated by noise.}
	\end{figure}

	\begin{figure}[ht!]
		\centering
		\begin{subfigure}{.45\textwidth}
			\includegraphics[width=\textwidth]{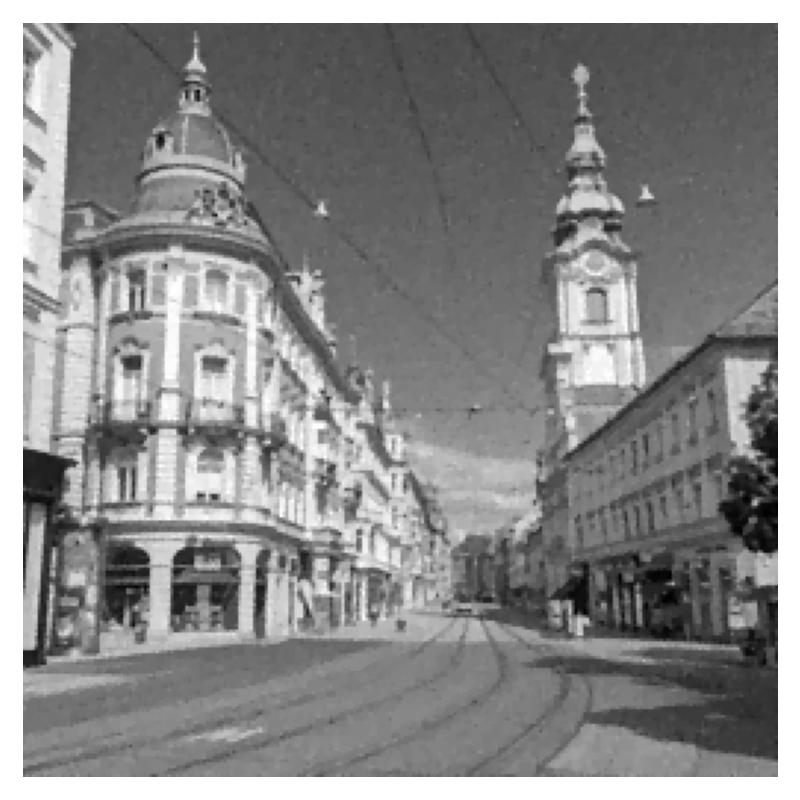}
			\caption{Huber TV reconstruction using the solution to \eqref{eq:bilevel}, $\alpha = 1.895$}\label{fig:large-deblur-mse-xa}
		\end{subfigure}
		\begin{subfigure}{.45\textwidth}
			\includegraphics[width=\textwidth]{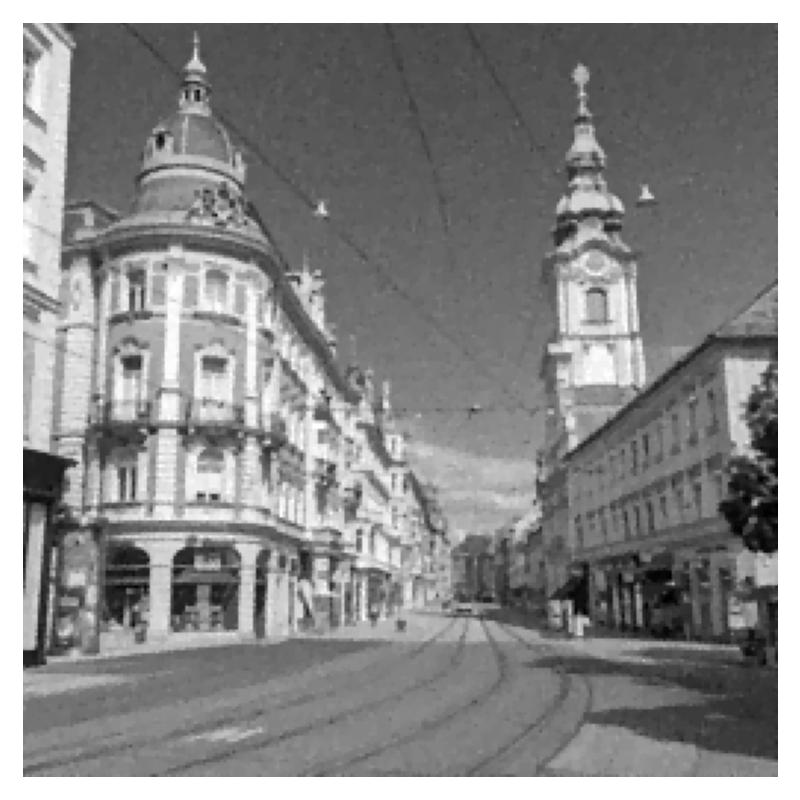}
			\caption{Huber TV reconstruction using the solution to \eqref{eq:bilevel-pred-risk-og}, $\alpha=1.898$ }\label{fig:large-deblur-pred-xa}
		\end{subfigure}
		\begin{subfigure}{.45\textwidth}
			\includegraphics[width=\textwidth]{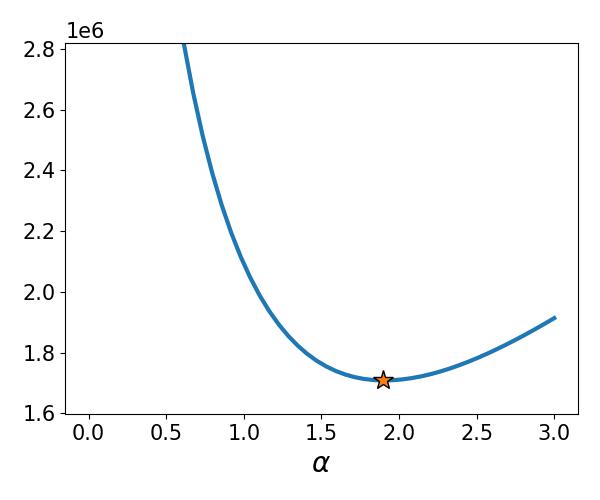}
			\caption{Squared error upper level cost \eqref{eq:ul} for Huber TV regularization}\label{fig:large-deblur-mse-ul}
		\end{subfigure}
		\begin{subfigure}{.45\textwidth}
			\includegraphics[width=\textwidth]{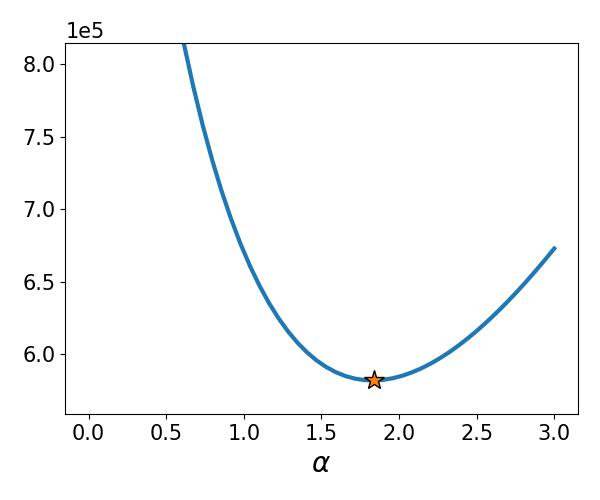}
			\caption{Predictive risk upper level cost \eqref{eq:bilevel-pred-risk-ul-og} for Huber TV regularization}\label{fig:large-deblur-pred-ul}
		\end{subfigure}
		\caption{Visualisation of Corollary~\ref{cory:pointwise} and Corollary~\ref{cory:pointwise-deblur}. Reconstructions, and plots of the upper level cost function for the large scale deblurring numerical experiment.
			The pixel values are clipped to $[0,255]$ where $0$ is black and $255$ is white.
			\tcb{In (c) and (d) we only display upper level cost evaluations that are finite and less than $10^{300}.$}
			Since we are not in the denoising setting, the old condition is not applicable but the presented theory succesfully identifies that $0$ is not a solution to the bilevel problem with both the squared error and predictive risk upper level cost.}
	\end{figure}

	We find that the inequalities of Corollary~\ref{cory:pointwise} and Corollary~\ref{cory:pointwise-deblur} are satisfied, with the larger side of the inequality being at least \tcb{150} orders of magnitude larger in both cases.
	This indicates that if the problem is ill-posed and the unregularized reconstruction $\xo$ is dominated by noise, then the relevant inequalities may be trivially satisfied in practice.

	Since both Corollary~\ref{cory:pointwise} and Corollary~\ref{cory:pointwise-deblur} are satisfied we are guaranteed that $0$ is not a solution to both the squared error and predictive risk bilevel learning problems, \eqref{eq:bilevel} and \eqref{eq:bilevel-pred-risk-og} respectively.
	Indeed, plots of the corresponding upper level cost functions are displayed in Figure~\ref{fig:large-deblur-mse-ul} and Figure~\ref{fig:large-deblur-pred-ul} where we see that both solutions are non-zero. 
	In this instance, both cost functions yield a similar optimal parameter, with the associated reconstructions displayed in Figure~\ref{fig:large-deblur-mse-xa} and Figure~\ref{fig:large-deblur-pred-xa} respectively.
	
	\section{Conclusion}\label{sec:future}
	In this work we contributed to the fundamental understanding of bilevel learning as a mathematically sound regularization parameter choice strategy.
	More precisely, we determined an upper bound of the upper right Dini derivative of the upper level cost function at $0$ in terms of  Bregman distances and evaluations of the regularizer. In demanding that this upper bound is strictly negative, we established  a new sufficient condition to guarantee positivity of solutions of the bilevel learning problem, applicable to settings with an injective forward operator.
	In addition to this, an extension to the predictive risk cost function was made for an invertible forward operator.
	Furthermore, we showed that in most common denoising problems our condition will always be satisfied and we are guaranteed that $0$ is not an optimal regularization parameter.
	We have shown both analytically and empirically that the presented results characterize positivity well, and are an improvement on a condition commonly used in existing theory.

	\section*{Acknowledgements} 
		MJE acknowledges support from EPSRC (EP/S026045/1, EP/T026693/1, EP/V026259/1) and the Leverhulme Trust (ECF-2019-478).
		The work of SG was partially supported by EPSRC under grant EP/T001593/1. 
		SJS is supported by a scholarship from the EPSRC Centre for Doctoral Training in Statistical Applied Mathematics at Bath (SAMBa), under the project EP/S022945/1.

\end{document}